\newtheorem{theorem}{Theorem}[section]
\newtheorem{proposition}[theorem]{Proposition}
\newtheorem{corollary}[theorem]{Corollary}
\newtheorem{lemma}[theorem]{Lemma}
\newtheorem{definition}[theorem]{Definition}
\newtheorem{remark}[theorem]{Remark}
\newtheorem{example}[theorem]{Example}
\numberwithin{equation}{section}
\newcommand{\cqfd}{\hfill{\small $\Box$}}
\newenvironment{proof}[1][]{{\bf Proof #1 : }}{\begin{flushright}
\cqfd\end{flushright}}
\newcommand{\gr}{\mathscr{G}}
\newcommand{\go}{\mathscr{G} ^{(0)}}
\newcommand\tgt[1]{{}^{T}\kern-1pt #1}
\newcommand\adi[1]{{}^{ad}\kern-1pt #1}
\title{Topological $K$-theory for discrete groups and Index theory}
\author{P. Carrillo Rouse\footnote{Institut de Math\'ematiques de Toulouse, Universit\'e Toulouse III, 118, Route de Narbonne, 31400
Toulouse, France. Email: paulo.carrillo@math.univ-toulouse.fr}, B.L. Wang\footnote{Mathematical Sciences Institute,  Australian National University, Canberra,  ACT 2601, Australia. Email: bai-ling.wang@anu.edu.au} and H. Wang\footnote{School of Mathematical Sciences, East China Normal University, Shanghai 200241, China. \newline Email: wanghang@math.ecnu.edu.cn}}
\date{\today}
\begin{document}

\maketitle

\begin{abstract}
For any countable discrete group $\Gamma$ (without any further assumptions on it) we first construct an explicit morphism from  the Left-Hand side of the Baum-Connes assembly map, $K^*_{top}(\Gamma)$, to the periodic cyclic homology of the group algebra, $HP_*(\mathbb{C}\Gamma)$. This morphism, called the Chern-Baum-Connes assembly map, allows in particular to give a proper and explicit formulation for a Chern-Connes pairing
\begin{equation}\nonumber
K^*_{top}(\Gamma)\times HP^*(\mathbb{C}\Gamma)\longrightarrow \mathbb{C}.
\end{equation}

Several theorems are needed to formulate the Chern-Baum-Connes assembly map. In particular we establish a delocalised Riemann-Roch theorem, the wrong way functoriality for periodic delocalised cohomology for $\Gamma$-proper actions, the construction of a Chern morphism between the Left-Hand side of Baum-Connes and a delocalised cohomology group associated to $\Gamma$ which is an isomorphism once tensoring with $\mathbb{C}$, and the construction of an explicit cohomological assembly map between the delocalised cohomology group associated to $\Gamma$ and the homology group $H_*(\Gamma,F\Gamma)$ (where $F\Gamma$ is the complex vector space freely generated by the set of elliptic elements in $\Gamma$). This last group $H_*(\Gamma,F\Gamma)$ identifies, by the work of Burghelea, as a direct factor of the cyclic periodic homology of the group algebra.

Moreover, in this paper $K^*_{top}(\Gamma)$ stands for the topological $K$-theory for discrete groups as originally proposed by Baum and Connes in their first paper, where wrong way functoriality in equivariant $K$-theory was used for its construction. As part of our results we prove that this model is indeed isomorphic to the analytic model for the left-hand side of the assembly map.

We conclude the paper by giving an explicit index theoretical formula for the above mentioned pairing (without any further assumptions on $\Gamma$) in terms of pairings of invariant forms, associated to geometric cycles and given in terms of delocalized Chern and Todd classes, and currents naturally associated to group cocycles using Burghelea's computation. This gives a complete solution, for discrete countable groups, to the problem of defining and computing a geometric pairing between the left hand side of the Baum-Connes assembly map, given in terms of geometric cycles associated to proper actions of the discrete group on manifolds, and cyclic periodic cohomology of the group algebra.

\end{abstract}

\pagebreak

\tableofcontents

\section{Introduction}

Let $\Gamma$ be a countable discrete group. In \cite{BCfete,BCKtop, BC}, Baum and Connes initiated a program on index theory for $\Gamma$-manifolds that lead to the so-called Baum-Connes assembly map associated to the group $\Gamma$ and to the ensuing  Baum-Connes conjecture that states that the assembly map is an isomorphism. The conjecture is at the present time still open, even though it has been proved (or in some cases just the injectivity or the surjectivity) for a large class of groups. One of the main reasons this conjecture became an important source of research is that it implies some other important conjectures in geometry, topology and analysis, for example, to mention the most famous ones, the Novikov conjecture, the stable Gromov-Lawson conjecture, the modified trace conjecture and the Kadison-Kaplansky conjecture; See for instance \cite{KL,Khal,Concg}. The assembly map in question is a group morphism
\begin{equation}
\mu: K^*_{top}(\Gamma)\longrightarrow K_*(C_r^*(\Gamma))
\end{equation}
where the left hand side $K^*_{top}(\Gamma)$ is a group constructed by assembling (as we will explain below with more details) the topological $K$-theory groups associated to $\Gamma$-proper manifolds ($spin^c$ and cocompact) while the right hand side stands for the topological $K$-theory of the reduced group $C^*$-algebra of the group. This assembly morphism has been also studied extensively in recent years in the context of higher secondary invariants, Gromov-Lawson indices, problems related to (existence and classification of) positive scalar curvature metrics and surgery on manifolds, for example see \cite{Ben}, \cite{BenRoy1}, \cite{CWXY}, \cite{DeeGof2}, \cite{HigRoe2010}, \cite{PSZ}, \cite{WXY} or \cite{Zenadv}, just to mention some recent developments related in some way to our paper.

Now, in the original papers of Baum and Connes, ref.cit., the cycles for the left hand side consist of classes of couples $(M,x)$, where $M$ is a $\Gamma$-proper, $spin^c$, cocompact manifold and $x$ is an element of the equivariant $K$-theory group\footnote{A model for this can be the $K$-theory group of the $C^*$-algebra $C^*_r(M\rtimes \Gamma)$ associated to the action groupoid $M\rtimes \Gamma$.} $K^*_{\Gamma}(M)$ and the equivalence relation is generated by being equal up to a $\Gamma$-family index, to be more precise, if $f:M\to N $ is an appropriate $\Gamma$-equivariant map between manifolds as above, then we shall have $(M,x)\sim (N,f_!(x))$ where the shriek map $f_!$ is a $\Gamma$-family $K$-theoretical index morphism.  In the original papers of Baum and Connes the assembly was then defined to be
$$[M,x]\mapsto \pi_M!(x)\in K_*(C_r^*(\Gamma))$$
where $\pi_M!(x)$ is the $\Gamma$-higher index associated to the projection $\pi_M: M\to \{*\}$.
Now, even if Baum and Connes gave the fundamental ideas and directions to follow, not all proofs were completed, but just sketched. For example, in those papers the well-definedness of the left hand side groups was sketched and not proved. The fact that it is well-defined is a consequence of the wrong way functoriality theorem proved in a more general context in \cite{CaWangAENS}. This wrong way functoriality also implies that the originally defined Baum-Connes assembly map is well-defined.

After the original formulation mentioned in the work of Baum and Connes, several different directions occurred; In fact, the assembly map was formalised some years later by Baum, Connes and Higson in \cite{BCH} using the powerful tool of Kasparov's $KK$-theory to properly define a ``Left hand side" and the assembly map, with the time other models for this left hand side also entered into the game (Baum-Douglas model, L\"uck model). The emergence of $KK$-theory led to deep and powerful results on the injectivity and surjectivity of the assembly map and allowed to establish several properties. One of the main motivations of the original ideas of Baum-Connes was to be able to give geometric formulas for the higher index, to be able to use classical topological tools for computing higher analytical indices, however actual index formulae computations of the assembly map are rare, for example in \cite{BCfete} Baum and Connes sketched a construction of a Chern character from the left hand side that would eventually lead to an actual pairing with higher currents or higher traces. In this paper we overcome this problem and fill this gap by constructing a Chern character type morphism
\begin{equation}\label{eqChernassemblyINTRO}
\xymatrix{
K_{top}^*(\Gamma)\ar[r]^-{ch_\mu^{top}}&HP_*(\mathbb{C}\Gamma)
}
\end{equation}
that will be called the Chern-Baum-Connes assembly map because it is constructed by assembling as Baum-Connes, and as will be detailed below, different Chern characters. We hence complete the program on the Chern character for discrete groups as stated in \cite{BCfete} and \cite{BCKtop} by Baum and Connes. Besides, this geometric approach allows us to continue this program by first comparing the models we use in this paper with more common analytic models used  in the literature and second by giving a complete solution, for countable discrete groups, to the problem of defining and computing a geometric pairing between the left hand side of the Baum-Connes assembly map, given in terms of geometric cycles associated to proper actions of the discrete group, and the cyclic periodic cohomology of the group algebra..
We now explain in more details the contents of our work.

For a  $\Gamma$-proper manifold $M$,  there are periodic delocalised cohomology groups $H^*_{\Gamma,deloc}(M)$, even and odd, that might be defined by periodizing the de Rham cohomology of the so-called inertia groupoid (full details in Sections \ref{sectionLiegrpdcohomology} and \ref{delocpushforwardsubsection}). In fact, one justification of why this is the correct group to consider is because there is a $\Gamma$-Chern character morphism

\begin{equation}\label{CherndelocalisedINTRO}
ch_M^\Gamma :K^*_{\Gamma}(M)\longrightarrow  H^*_{\Gamma,deloc}(M)
\end{equation}
given explicitly as the composition of the Connes-Chern character\footnote{Defined a priori in $K_*(C_c^\infty(M\rtimes \Gamma))$ but $C_c^\infty(M\rtimes \Gamma)\subset C^*(M\rtimes \Gamma)$ is stable under holomorphic calculus since the action is proper.}
\begin{equation}
Ch :K^*_{\Gamma}(M)\longrightarrow  HP_*(C_c^\infty(M\rtimes \Gamma)).
\end{equation}
followed by an isomorphism
\begin{equation}
\xymatrix{
HP_*(C_c^\infty(M\rtimes \Gamma))\ar[r]^-{TX}_-{\cong}& H^*_{\Gamma,deloc}(M).}
\end{equation}
explicitly described in \cite{TuXuChern} by Tu and Xu, and recalled in Appendix \ref{appendixTuXu}. In fact, Tu and Xu proved even more in \cite{TuXuChern} that the Chern character (\ref{CherndelocalisedINTRO}) is an isomorphism up to tensoring with $\mathbb{C}$.

From our perspective there are two major technical problems in index theory for $\Gamma$-proper actions on smooth manifolds:
\begin{enumerate}
\item Relate different $\Gamma$-proper actions and their pairings as above. Explicitly this means being able to prove (and give sense to) Riemann-Roch type theorems and pushforward functoriality for such actions and then being able to assemble these results to give an explicit formulation of a pairing
\begin{equation}
K^*_{top}(\Gamma)\times HP^*(\mathbb{C}\Gamma)\to \mathbb{C}
\end{equation}
that only depends on the group $\Gamma$ and that contains/exhausts all possible index pairings for $\Gamma$-proper smooth cocompact manifolds.
\item Compute explicit index formulas for the pairing above. In particular, as we will discuss below, the cyclic periodic cohomology groups $HP^*(\mathbb{C}\Gamma)$ can be computed explicitly, by the work of Burghelea \cite{Bur}, in terms of group cohomology groups associated to the centralizers of elliptic and non elliptic elements. The problem of computing an explicit formula for the pairing above can be stated as giving a geometric/topological expression for the pairings
\begin{equation}
\langle [M,x],\tau\rangle \in \mathbb{C}
\end{equation}
for a geometric cycle $[M,x]\in K^*_{top}(\Gamma)$ and a group cocycle $\tau$ in a group cohomology  (that only depends on its conjugacy class) of some elliptic element (as we will see it is zero for non-elliptic elements). There are several particular cases computed in the literature, for free and proper actions and localised at the unit cocycle  by Connes and Moscovici \cite{CMnov}, for particular Connes-Moscovici cocyles  by Ponge and Wang in \cite{PW1,PW2}, for some delocalised traces but under some extra hypothesis on the extensibility of the cocycles by the two last named authors in \cite{WangWangJDG}, for mention some of them, see also \cite{PPTtransverse,PR}. We are certainly aware of several other computations but as far as we know there is not a general and systematic procedure or formula that computes the above pairing.
\end{enumerate}

In this article we will completely solve the two problems above. In order to be more precise, we need to introduce some terminology and explain what are the results leading to the solution of these problems.

We now explain the main contents of this paper. Let $f:M\longrightarrow N$ be a $\Gamma$-equivariant $C^\infty$-map between two $\Gamma$-proper cocompact manifolds $M$ and $N$.
We define (Definition~\ref{defpushforwardCPH}) a pushforward (shriek) map in periodic delocalised cohomology
\begin{equation}
f_!:H^{*-r_f}_{\Gamma,deloc}(M) \longrightarrow H^*_{\Gamma,deloc}(N),
\end{equation}	
where $r_f$ is the rank of the vector bundle $T_f=TM\oplus f^*TN$, and under the assumption that $T_f^*$ admits a $spin^c$-structure, in other words, $f$ is $\Gamma$-equivariant and $K$-oriented. The construction of these shriek maps follows the same lines of the shriek maps in $K$-theory constructed in \cite{CaWangAENS} (following the ideas proposed originally in \cite{BC} and \cite{Concg}). Indeed we use $f$ to construct an appropriate deformation groupoid that allows to define $f_!$ as a composition of a Thom isomorphism followed by a deformation morphism, see Section \ref{delocpushforwardsubsection} for more details.

A very important and fundamental point to remark is that to be able to apply constructions similar to those used in $K$-theory one needs, besides suitable groupoids, appropriate smooth subalgebras of the $C^*$-algebras of the corresponding groupoids, in particular appropriate enough to have the right exact sequences that lead to classical arguments in algebraic topology (the homotopy invariance, periodicity, six term exact sequences, etc.). In our case we explain in Appendices \ref{appendixgrpdalgebras} and \ref{appendixB} which are the main groupoids used in this paper and which algebras we are using in each case when applying cohomology (mainly periodic cyclic (co)homology) to them. In Appendix \ref{appendixdefindices} we construct and study the deformation index type morphisms applied to those algebras/groupoids in periodic cyclic (co)homology.

The first main result of this paper is that these cohomological pushforward maps are compatible with the K-theoretical pushforward maps (defined in \cite{CaWangAENS}) via a twisted Chern character. The result states as follows (Theorem \ref{RRthm}):

\begin{theorem}[Delocalised Riemman-Roch]\label{RRthmINTRO}
Let $f:M\longrightarrow N$ be a $\Gamma$-equivariant, $K$-oriented $C^\infty$-map between two $\Gamma$-proper cocompact manifolds $M$ and $N$. The following diagram is commutative
\begin{equation}
\xymatrix{
K^{*-r_f}_{\Gamma}(M)\ar[d]_-{ch_{Td_M^\Gamma}}\ar[r]^-{f_!}& K^*_{\Gamma}(N)\ar[d]^-{ch_{Td_N^\Gamma}} \\
H^{*-r_f}_{\Gamma,deloc}(M)\ar[r]_-{f_!}& H^*_{\Gamma,deloc}(N).
}
\end{equation}
\end{theorem}
In the above statement the twisted Chern character $ch_{Td_M^\Gamma}$ is the morphism
\begin{equation}
ch_{Td_M^\Gamma}:K^*_\Gamma(M)\longrightarrow  H^{*}_{\Gamma,deloc}(M)
\end{equation}
given by $x\mapsto ch^\Gamma_M(x)\wedge Td_M^\Gamma$, where $Td_M^\Gamma$ is a delocalised Todd class, recalled/defined in Section \ref{subsectionRR}.

In order to establish  the above delocalised Riemann-Roch diagram, we need to establish first the functoriality of the pushforward maps in delocalised cohomology. This again is done by using appropriate deformation groupoids, appropriate smooth subalgebras associated to them and classic cohomological properties that allows to adapt to this setting the K-theoretical analog proof. The result (Theorem \ref{wrongwayfunctorialitycohomology}) states as follows:

\begin{theorem}[Pushforward functoriality in delocalised cohomology]\label{wrongwayfunctorialitycohomologyINTRO}
Let $f:M\longrightarrow N$ and $g:N\longrightarrow L$ be $\Gamma$-equivariant, $K$-oriented $C^\infty$-maps between $\Gamma$-proper cocompact manifolds as above. Then
\begin{equation}
(g\circ f)_!=g_!\circ f_!
\end{equation}
as morphisms from $H^*_{\Gamma,deloc}(M)$ to $H^{*+r_{g\circ f}}_{\Gamma,deloc}(L)$ ($r_{g\circ f}=r_f+r_g\,\, mod \, 2$).
\end{theorem}

One of the first things one can do with such a result is to assemble the groups $H^*_{\Gamma,deloc}(M)$ by using the pushforward maps and in an essential way the pushforward functoriality.
For $*=0,1\,\, mod\, 2$,  we can consider the abelian groups
\begin{equation}
H^*_{top}(\Gamma)=\varinjlim_{f_!} H^*_{\Gamma,deloc}(M).
\end{equation}
where the limit is taken over the $\Gamma$-proper cocompact $spin^c$ manifolds of dimension equal to $*$ modulo $2$.

Equivalently it can be defined as the abelian free group with generators $(M,\omega)$ with $M$ a $\Gamma$-proper cocompact $spin^c$ manifold and $\omega \in H^*_{\Gamma,deloc}(M)$ with dimension of $M$ equal to $*$ modulo $2$, and the equivalence relation generated by
$\omega\sim f_!(\omega)$ for $f:M\to N$ as above.

As a direct consequence of the two theorems above together with the pushforward functoriality in $K$-theory (Theorem 4.2 in \cite{CaWangAENS}) we can assemble the twisted Chern characters and get the following  Chern character for any discrete group $\Gamma$  (Theorem \ref{Cherntopthm}):

\begin{theorem}[Chern character for discrete groups]\label{CherntopthmINTRO}
For any discrete group $\Gamma$, there is a well defined Chern character morphism
\begin{equation}
ch^{\Gamma}: K^*_{top}(\Gamma) \longrightarrow H^*_{top}(\Gamma)
\end{equation}
given by
\begin{equation}
ch^{\Gamma}([M,x])=[M, ch^\Gamma_M(x)\wedge Td_M^\Gamma].
\end{equation}
Furthermore, it is an isomorphism once tensoring with $\mathbb{C}$.
\end{theorem}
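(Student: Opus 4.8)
The idea is to deduce the theorem formally from the structural results already established — the Delocalised Riemann--Roch Theorem \ref{RRthmINTRO}, the two pushforward functoriality theorems (Theorem \ref{wrongwayfunctorialitycohomologyINTRO} and Theorem~4.2 of \cite{CaWangAENS}) and the Tu--Xu theorem that $ch^\Gamma_M$ is a rational isomorphism — the only genuinely new input being the invertibility of the delocalised Todd class. I would proceed in two steps: first the well-definedness of $ch^{top}$, then the rational isomorphism statement.

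For the first step, recall that $K^*_{top}(\Gamma)$ is the free abelian group on pairs $(M,x)$ with $x\in K^*_\Gamma(M)$, modulo additivity in $x$ and the relations $(M,x)\sim(N,f_!x)$ for $f\colon M\to N$ a $\Gamma$-equivariant $K$-oriented $C^\infty$-map, and that $H^*_{top}(\Gamma)=\varinjlim_{f_!}H^*_{\Gamma,deloc}(M)$ carries the analogous relation $[M,\omega]=[N,f_!\omega]$. Because $x\mapsto ch^\Gamma_M(x)\wedge Td_M^\Gamma$ is additive, the assignment $(M,x)\mapsto\big(M,\,ch^\Gamma_M(x)\wedge Td_M^\Gamma\big)$ induces a homomorphism on the free groups, and the only point to verify is compatibility with the shriek relation. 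This is precisely what Theorem \ref{RRthmINTRO} gives: for $f\colon M\to N$ as above,
\[
[M,\,ch^\Gamma_M(x)\wedge Td_M^\Gamma]=\big[N,\,f_!\big(ch^\Gamma_M(x)\wedge Td_M^\Gamma\big)\big]=[N,\,ch^\Gamma_N(f_!x)\wedge Td_N^\Gamma]
\]
in $H^*_{top}(\Gamma)$, the first equality by the defining relation of $H^*_{top}(\Gamma)$ and the second by Delocalised Riemann--Roch. Hence $ch^{top}$ is a well-defined morphism of abelian groups, given on representatives by the stated formula.

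For the second step, I would present both sides as colimits of the same shape and recognise $ch^{top}$ as a colimit of isomorphisms. Let $\mathcal{C}_*$ be the category whose objects are the $\Gamma$-proper cocompact $spin^c$ manifolds of dimension $\equiv *\pmod 2$ and whose morphisms are the $\Gamma$-equivariant $K$-oriented $C^\infty$-maps (these compose). By Theorem~4.2 of \cite{CaWangAENS}, $M\mapsto K^*_\Gamma(M)$ with the shriek maps is a functor $\mathcal{C}_*\to\mathrm{Ab}$, and by Theorem \ref{wrongwayfunctorialitycohomologyINTRO}, $M\mapsto H^*_{\Gamma,deloc}(M)$ with the shriek maps is a functor $\mathcal{C}_*\to\mathrm{Vect}_{\mathbb{C}}$; by construction $K^*_{top}(\Gamma)$ and $H^*_{top}(\Gamma)$ are the colimits of these two functors, computed in $\mathrm{Ab}$ and in $\mathrm{Vect}_{\mathbb{C}}$ respectively (the colimit in an additive category accounting for the additivity relation). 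Theorem \ref{RRthmINTRO} says exactly that the twisted Chern characters $ch_{Td_M^\Gamma}$ form a natural transformation $K^*_\Gamma(-)\Rightarrow H^*_{\Gamma,deloc}(-)$, and chasing the colimit description shows that $ch^{top}$ is the induced map on colimits. Since $-\otimes_{\mathbb{Z}}\mathbb{C}$ is a left adjoint it commutes with colimits, so it is enough to show that for each $M$ the $\mathbb{C}$-linear extension $K^*_\Gamma(M)\otimes_{\mathbb{Z}}\mathbb{C}\to H^*_{\Gamma,deloc}(M)$ of $ch_{Td_M^\Gamma}$ is an isomorphism, a colimit of isomorphisms being an isomorphism. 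That extension is the composite of the $\mathbb{C}$-linear extension of $ch^\Gamma_M$, an isomorphism by Tu--Xu (\cite{TuXuChern}, recalled in Appendix \ref{appendixTuXu}), with the operator $-\wedge Td_M^\Gamma$, which is an automorphism of $H^*_{\Gamma,deloc}(M)$ because $Td_M^\Gamma$ is invertible in this graded ring. Hence $ch^{top}\otimes\mathbb{C}$ is an isomorphism.

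The main obstacle — essentially the only step that is not bookkeeping — is the invertibility of $Td_M^\Gamma$ in $H^*_{\Gamma,deloc}(M)$: I would check, component by component on the inertia groupoid attached to $M\rtimes\Gamma$, that the degree-zero part of $Td_M^\Gamma$ over each fixed-point stratum is invertible, so that $Td_M^\Gamma$ is invertible (a unit in lowest degree plus a nilpotent correction), using the explicit form of $Td_M^\Gamma$ recalled in Section \ref{subsectionRR}. A secondary, purely set-theoretic point is that the colimit over $\mathcal{C}_*$ should be computed over a small cofinal subsystem — for instance the $\Gamma$-cocompact submanifolds of a smooth model of $\underline{E}\Gamma$ in the relevant dimension — which is harmless; the remaining verifications (that $K$-orientations compose and that $ch^{top}$ respects the $\mathbb{Z}/2$-grading) are routine given the preceding sections.
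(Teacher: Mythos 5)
Your proposal is correct and follows the same route the paper takes: the paper states this theorem as an immediate corollary of the delocalised Riemann--Roch theorem, the two pushforward functoriality theorems, and the Tu--Xu rational isomorphism, which is exactly the skeleton you flesh out (well-definedness via Riemann--Roch, the rational isomorphism via a colimit of the objectwise isomorphisms $ch_{Td_M^\Gamma}\otimes\mathbb{C}$). Your explicit attention to the invertibility of $Td_M^\Gamma$ (unit in degree zero on each fixed-point stratum plus nilpotent higher terms, acting on the compactly supported groups) is a worthwhile detail the paper leaves implicit, not a deviation.
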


In \cite{Voigt2} Voigt constructs a Chern character from bivariant equivariant Kasparov theory to the equivariant bivariant cohomology theory defined previously by Baum and Schneider in \cite{BaumSch}. He obtained in particular a Chern character from the analytic left hand side model of Baum and Connes and some delocalised Baum and Schneider cohomology group. In principle his Chern character and ours should coincide for formal reasons (at least it is expected to be like that), it would be very interesting to explore this relation. Another related Chern character was constructed by Matthey in \cite{Matthey} following the related works of L\"uck in \cite{Luck}, in these papers the analytic model is used as well. Even if we do not compare our Chern morphisms with that by Matthey\footnote{It would be very interesting but it is out of the particular subject of the present article.}, we follow in Section~6 the ideas and results of Matthey on the delocalization of the Baum-Connes assembly map. Now, there is however a big difference on the approaches and techniques developed for these in principle different Chern characters but also in the actual computation in terms of cycles, the Chern character in Theorem~\ref{CherntopthmINTRO} is computed explicitly in terms of cycles and of the delocalised Todd classes thanks to the delocalised Riemann-Roch theorem and  the wrong way functorialities in $K$-theory and in equivariant cohomology. This will play a major role in the actual computation of index pairing formulas developed below.

 To describe what follows we need to give more details on the periodic delocalized cohomology groups $H^*_{\Gamma,deloc}(M)$ for  a $\Gamma$-proper cocompact $spin^c$ manifold $M$. In fact, since the action of $\Gamma$ on $M$ is proper, and  for $*=0,1 \,\, mod \,2$, the associated periodic delocalised cohomology groups can be given, as explained in appendix \ref{appendixTuXu}, by
\begin{equation}\label{deloccohomologydecompositionINTRO}
H^*_{\Gamma,deloc}(M):=\bigoplus_{ \langle g \rangle \in \langle \Gamma\rangle^{fin}}\prod_{k=*\,mod\, 2} H^k_c(M_g\rtimes \Gamma_g),
\end{equation}
where $\langle \Gamma\rangle^{fin}$  stands for the set of conjugacy classes of finite order elements (called elliptic elements as well) of $\Gamma$ with  a fixed finite order element $g$  of its conjugacy class,  $\Gamma_g=\{h\in\Gamma: hg=gh\}$ is the centralizer of $g$; and where $M_g\rtimes \Gamma_g$ stands for the action groupoid associated to the canonical action by conjugation of the centralizer $\Gamma_g=\{h\in\Gamma: hg=gh\}$ on the fixed point submanifold $M_g=\{x\in M: x\cdot g=x\}$, and $H^k_c(M_g\rtimes \Gamma_g)$ stands for the compactly supported de Rham groupoid cohomology as reviewed in the next section.
For these groups, we prove in Section~\ref{sectionChernassembly} Lemma~\ref{lemmaintegrationfibers} that the canonical integrations along the fibers of the canonical groupoid projections
\begin{equation}
M_g\rtimes \Gamma_g\longrightarrow \Gamma_g,
\end{equation}
\begin{equation}
\pi_{M_g}!:\Omega_c^n(M_g\times \Gamma_g^p)\longrightarrow \mathbb{C}\Gamma_g^p,
\end{equation}
with $p\in \mathbb{N}$ and $n=dim\, M_g$, given by
\begin{equation}
\pi_{M_g}!(\omega)(\gamma):=\int_{M_g}\omega(m,\gamma)
\end{equation}
for $\gamma\in \Gamma_g^p$ and for  $\omega\in \Omega_c^n(M_g\times \Gamma_g^p)$, induce a well-defined morphism
\begin{equation}
\xymatrix{
H^*_{\Gamma,deloc}(M)\ar[r]^-{\pi_M!}&H_*(\Gamma,F\Gamma),
}
\end{equation}
for every $M$ of dimension $n=*, mod\, 2$, and where
\begin{equation}
H_*(\Gamma,F\Gamma):=\left( \bigoplus_{  \langle g \rangle \in  \langle \Gamma\rangle^{fin}}\bigoplus_{k=*\,mod\, 2}H_k(\Gamma_g;\mathbb{C})\right)
\end{equation}
is the group homology of $\Gamma$ with coefficients in the complex vector space $F\Gamma$ generated by the elliptic elements (See Appendix~C of Matthey~\cite{Matthey} for a discussion of this isomorphism that uses the Shapiro's Lemma).

Section 6 of this paper is mainly dedicated to prove, through several lemmas and propositions of independently interest, the following theorem.

\begin{theorem}\label{thmmuFINTRO}
The maps
\begin{equation}
\xymatrix{
H^*_{\Gamma,deloc}(M)\ar[r]^-{\pi_M!}&H_*(\Gamma,F\Gamma),
}
\end{equation}
induce a well-defined cohomological assembly map
\begin{equation}
\mu_{F\Gamma}: H^*_{top}(\Gamma)\longrightarrow H_*(\Gamma,F\Gamma)
\end{equation}
given by
\begin{equation}
\mu_{F\Gamma}([M,\omega])=\pi_M!(\omega).
\end{equation}
\end{theorem}

There are also other models in the literature for a cohomological assembly map as in Theorem~\ref{thmmuFINTRO}. See for example \cite{CorTar1} and \cite{EngelMar} for more formal constructions. The novelty of our construction (see Section 5) is that it is given explicitly in terms of cohomological pushforward maps and based on the wrong way functoriality. It would be of course interesting to compare the above assembly to the previous ones, since for example in the above cited papers the authors get very interesting properties related to index theory. In our case, the assembly map from Theorem~\ref{thmmuFINTRO} will allow, as explained below, to give index formulas for the pairing between periodic cyclic cohomology and topological $K$-theory for discrete groups.

Now, the periodic cyclic homology groups of $\mathbb{C}\Gamma$ have been computed. Indeed Burghelea showed in \cite{Bur} that there are isomorphisms
\begin{equation}\label{groupalgebracohomology}
HP^*(\mathbb{C}\Gamma)\cong \left( \prod_{  \langle g \rangle \in  \langle \Gamma\rangle^{fin}}\bigoplus_{k=*\,mod\, 2}H^k(\Gamma_g;\mathbb{C})\right)  \bigoplus \prod_{  \langle g \rangle \in  \langle \Gamma\rangle^{\infty}} T^*(g,\mathbb{C})
\end{equation}
 where $\langle \Gamma\rangle^{\infty}$ stands for the set of conjugacy classes of infinite order elements) of $\Gamma$, $g\in \Gamma$ is a fixed element (of infinite order for the second factor) of its conjugacy class; and where $T^*(g,\mathbb{C})$ are some  limit groups depending  on the cohomology groups of $N_g$, the quotient group of the centralizer $\Gamma_g=\{h\in\Gamma: hg=gh\}$ by the subgroup generated by $g$, but that will not be explicitly described in this paper or its sequel since this factor will not enter in our index formula computations. In fact, by Burghelea's work \cite{Bur} there is a morphism
\begin{equation}
\xymatrix{
B:  H_*(\Gamma,F\Gamma)\ar[r] & HP_*(\mathbb{C}\Gamma)
}
\end{equation}
which is an isomorphism onto its image as a direct factor. In particular, we can consider the assembly map
\begin{equation}
\mu_{\Gamma}:   H^*_{top}(\Gamma)\longrightarrow HP_*(\mathbb{C}\Gamma)
\end{equation}
given by the composition of the cohomological assembly map $\mu_{F\Gamma}$ of the theorem above followed by Burghelea's morphism $B$.

We are ready to state the final theorem that combines all the above mentioned results and gives an explicit formula for the index pairing. For this, note that there are canonical morphisms (for every $g$ of finite order)
\begin{equation}
 H^k(\Gamma_g;\mathbb{C})\stackrel{\pi_g^*}{\longrightarrow}H^k(M_g\rtimes \Gamma_g)
\end{equation}
induced from the canonical groupoid projection $\pi_g: M_g\rtimes \Gamma_g\to \Gamma_g$. As we justify in Section~\ref{sectionChernassembly} these pullback morphisms are in duality with the morphisms induced by integration along the fibers described in the last theorem. This finally leads to the following theorem (see Theorems~\ref{Maincorollarygrouphomoogy} and \ref{ChernassemblythmHP}) that  gives an index theoretical geometric formula for the pairing.

\begin{theorem}[The Chern assembly map]\label{ChernassemblythmINTRO}
For any discrete group $\Gamma$, there is well-defined morphism
\begin{equation}
\xymatrix{
K^*_{top}(\Gamma)\ar[rr]^-{ch_\mu^{\Gamma}}&&HP_*(\mathbb{C}\Gamma)\\
}
\end{equation}
given by the composition of the Chern character
\begin{equation}
ch^{\Gamma}: K^*_{top}(\Gamma) \longrightarrow H^*_{top}(\Gamma)
\end{equation}
followed by the cohomological assembly
\begin{equation}
\mu_{\Gamma}: H^*_{top}(\Gamma)\longrightarrow HP_*(\mathbb{C}\Gamma).
\end{equation}
The morphism induces a  well-defined pairing
\begin{equation}
K^*_{top}(\Gamma)\times HP^*(\mathbb{C}\Gamma)\to \mathbb{C}
\end{equation}
computed in every $g$-component (with respect to Burghelea's decomposition above) by
\begin{equation}\label{pairingformulaintro}
\langle [M,x],\tau_g\rangle = \langle ch_M^g(x)\wedge Td_g^M,\pi_g^*(\tau_g)\rangle
\end{equation}
where the right hand side corresponds the pairing between $\Gamma_g$-invariant forms and currents on $M_g$.
The morphism $ch_\mu^{\Gamma}$ will be called the Chern-Baum-Connes assembly map of the group $\Gamma$.
\end{theorem}

We remark  that the Chern-Baum-Connes assembly map in Theorem~\ref{ChernassemblythmINTRO} is explicitly computed in terms of cycles and delocalised Todd classes. It would be interesting to compare our Chern assembly with the ones obtained by other constructions; For example, as observed by Engel and Marcinkowski in \cite{EngelMar}, one can use Yu's algebraic assembly map \cite{Yualgebraic} to get as well a Chern-Baum-Connes assembly map. One can also use the cohomological assembly map studied by Corti\~{n}as and Tartaglia in \cite{CorTar1}. As far as we know there are no comparison results about these, in principle different, models for the Chern assembly map.

Also, relating the $K$-theoretical assembly map with the cohomological assembly map is also part of the program of studying secondary invariants. For example, in \cite{PSZ}, Piazza, Schick and Zenobi used noncommutative de Rham theory and delocalised versions of it to produce some long surgery exact sequences in homology and they obtained very deep results on the study of higher rho invariants by mapping the $K$-theoretical surgery sequence to the homological one. It would be also very interesting to relate our constructions to theirs and study secondary invariants. For example, in \cite{PiaZen}, Piazza and Zenobi used groupoid techniques in a crucial way to study some related problems for singular spaces. Another related work in this direction can be found in \cite{CWXY}.

All the results in this paper are based on the use of classical algebraic topology tools once admitting the use of groupoids and deformation groupoids. As far as we are aware, all the previous partial computations (or particular cases) of index pairings of the kind studied in this paper were computed using mainly the local index formula techniques or related analytical methods. We will discuss elsewhere how the deformation groupoid approach gives a geometrical conceptual explanation and a new insight on the analytical local index techniques.

{\bf Ackowledgements:} We would like to thank Alexander Engel and Christian Voigt for their comments on our article and for pointing out to us very interesting previous works of them and other authors that are closely related to our paper.

\section{Preliminaries}
\subsection{Deformation Lie groupoids}

In this paper we deal with Lie groupoids for which we are going to use the notation $G\rightrightarrows M$, see \cite{Mac,Pat} for more details on Lie groupoids.

In this section we want to recall some particular groupoids that will be used in this paper, their construction/definition is based on the deformation to the normal cone construction to be recalled in the following.

Let $M$ be a $C^\infty$ manifold and $X\subset M$ be a $C^\infty$ submanifold. We denote
by $\mathscr{N}(M,X)$ the normal bundle to $X$ in $M$.
We define the following set
\begin{align}
\mathscr{D}(M,X):= \left( \mathscr{N}(M,X) \times {0} \right) \bigsqcup   \left(M \times \mathbb{R}^* \right).
\end{align}
The purpose of this section is to recall how to define a $C^\infty$-structure on $\mathscr{D}(M,X)$. This is more or less classical, for example
it was extensively used in \cite{HS}.

Let us first consider the case where $M=\mathbb{R}^p\times \mathbb{R}^{n-p}$
and $X=\mathbb{R}^p \times \{ 0\}$ ( here we
identify  $X$ canonically with $ \mathbb{R}^p$). We denote by
$q=n-p$ and by $\mathscr{D}_{p}^{n}$ for $\mathscr{D}(\mathbb{R}^n,\mathbb{R}^p)$ as above. In this case
we   have that $\mathscr{D}_{p}^{n}=\mathbb{R}^p \times \mathbb{R}^q \times \mathbb{R}$ (as a
set). Consider the
bijection  $\psi: \mathbb{R}^p \times \mathbb{R}^q \times \mathbb{R} \rightarrow
\mathscr{D}_{p}^{n}$ given by
\begin{equation}\label{psi}
\psi(x,\xi ,t) = \left\{
\begin{array}{cc}
(x,\xi ,0) &\mbox{ if } t=0 \\
(x,t\xi ,t) &\mbox{ if } t\neq0
\end{array}\right.
\end{equation}
whose  inverse is given explicitly by
$$
\psi^{-1}(x,\xi ,t) = \left\{
\begin{array}{cc}
(x,\xi ,0) &\mbox{ if } t=0 \\
(x,\frac{1}{t}\xi ,t) &\mbox{ if } t\neq0
\end{array}\right.
$$
We can consider the $C^\infty$-structure on $\mathscr{D}_{p}^{n}$
induced by this bijection.

We pass now to the general case. A local chart
$(\mathscr{U},\phi)$ of $M$ at $x$  is said to be an $X$-slice   if
\begin{itemize}
\item[1)]  $\mathscr{U}$  is an open neighborhood of $x$ in $M$ and  $\phi : \mathscr{U}  \rightarrow U \subset \mathbb{R}^p\times \mathbb{R}^q$ is a diffeomorphsim such that $\phi(x) =(0, 0)$.
\item[2)]  Setting $V =U \cap (\mathbb{R}^p \times \{ 0\})$, then
$\phi^{-1}(V) =   \mathscr{U} \cap X$ , denoted by $\mathscr{V}$.
\end{itemize}
With these notations understood, we have $\mathscr{D}(U,V)\subset \mathscr{D}_{p}^{n}$ as an
open subset.   For $x\in \mathscr{V}$ we have $\phi (x)\in \mathbb{R}^p
\times \{0\}$. If we write
$\phi(x)=(\phi_1(x),0)$, then
$$ \phi_1 :\mathscr{V} \rightarrow V \subset \mathbb{R}^p$$
is a diffeomorphism.  Define a function
\begin{equation}\label{phi}
\tilde{\phi}:\mathscr{D}(\mathscr{U},\mathscr{V}) \rightarrow \mathscr{D}(U,V)
\end{equation}
by setting
$\tilde{\phi}(v,\xi ,0)= (\phi_1 (v),d_N\phi_v (\xi ),0)$ and
$\tilde{\phi}(u,t)= (\phi (u),t)$
for $t\neq 0$. Here
$d_N\phi_v: N_v \rightarrow \mathbb{R}^q$ is the normal component of the
 derivative $d_v\phi$ for $v\in \mathscr{V}$. It is clear that $\tilde{\phi}$ is
 also a  bijection. In particular,  it induces a $C^{\infty}$ structure on $\mathscr{D}_{\mathscr{V}}^{\mathscr{U}}$.
Now, let us consider an atlas
$ \{ (\mathscr{U}_{\alpha},\phi_{\alpha}) \}_{\alpha \in \Delta}$ of $M$
 consisting of $X-$slices. Then the collection $ \{ (\mathscr{D}(\mathscr{U}_\alpha,\mathscr{V}_{\alpha}),\tilde{\phi}_{\alpha})
  \} _{\alpha \in \Delta }$ is a $C^\infty$-atlas of
  $\mathscr{D}(M,X)$ (Proposition 3.1 in \cite{Ca4}).

\begin{definition}[Deformation to the normal cone]
Let $X\subset M$ be as above. The set
$\mathscr{D}(M,X)$ equipped with the  $C^{\infty}$ structure
induced by the atlas of  $X$-slices is called
 the deformation to the  normal cone associated  to   the embedding
$X\subset M$.
\end{definition}


One important feature about the deformation to the normal cone is the functoriality. More explicitly,  let
 $f:(M,X)\rightarrow (M',X')$
be a   $C^\infty$ map
$f:M\rightarrow M'$  with $f(X)\subset X'$. Define
$ \mathscr{D}(f): \mathscr{D}(M,X) \rightarrow \mathscr{D}(M',X') $ by the following formulas: \begin{enumerate}
\item[1)] $\mathscr{D}(f) (m ,t)= (f(m),t)$ for $t\neq 0$,

\item[2)]  $\mathscr{D}(f) (x,\xi ,0)= (f(x),d_Nf_x (\xi),0)$,
where $d_Nf_x$ is by definition the map
\[  (\mathscr{N}(M,X))_x
\stackrel{d_Nf_x}{\longrightarrow}  (\mathscr{N}(M',X'))_{f(x)} \]
induced by $ T_xM
\stackrel{df_x}{\longrightarrow}  T_{f(x)}M'$ and $\mathscr{N}(M',X')$ is the normal bundle of $X'$ in $M'$.
\end{enumerate}
 Then $\mathscr{D}(f):\mathscr{D}(M,X) \rightarrow \mathscr{D}(M',X')$ is a $C^\infty$-map (Proposition 3.4 in \cite{Ca4}). In the language of categories, the deformation to the normal cone  construction defines a functor
\begin{equation}\label{fundnc}
\mathscr{D}: \mathscr{C}_2^{\infty}\longrightarrow \mathscr{C}^{\infty} ,
\end{equation}
where $\mathscr{C}^{\infty}$ is the category of $C^\infty$-manifolds and $\mathscr{C}^{\infty}_2$ is the category of pairs of $C^\infty$-manifolds.

We  briefly discuss here the deformation groupoid of an immersion of groupoids which is called  the normal groupoid in   \cite{HS}.

Given  an immersion of Lie groupoids $\gr_1\stackrel{\varphi}{\rightarrow}\gr_2$, let
$\gr^N_1=\mathscr{N}(\gr_2,\gr_1)$ be the total space of the normal bundle to $\varphi$, and $(\gr_{1}^{(0)})^N$ be the total space of the normal bundle to $\varphi_0: \go_1 \to \go_2$.  Consider $\gr^N_1\rightrightarrows (\gr_{1}^{(0)})^N$ with the following structure maps: The source map is the derivation in the normal direction
$d_Ns:\gr^N_1\rightarrow (\gr_{1}^{(0)})^N$ of the source map (seen as a pair of maps) $s:(\gr_2,\gr_1)\rightarrow (\gr_{2}^{(0)},\gr_{1}^{(0)})$ and similarly for the target map.

 The deformation to the normal cone construction allows us to consider a $C^{\infty}$ structure on
$$
\gr_{\varphi}:=\left( \gr^N_1\times \{0\} \right) \bigsqcup  \left( \gr_2\times \mathbb{R}^*\right),
$$
such that $\gr^N_1\times \{0\}$ is a closed saturated submanifold and so $\gr_2\times \mathbb{R}^*$ is an open submanifold.
The following result is  an immediate consequence of the functoriality of the deformation to the normal cone construction.

\begin{proposition}[Hilsum-Skandalis, 3.1, 3.19 \cite{HS}]\label{HSimmer}
Consider an immersion $\gr_1\stackrel{\varphi}{\rightarrow}\gr_2$ as above.
Let $\gr_{\varphi_0}:= \big( (\gr_{1}^{(0)})^N\times \{0\} \big) \bigsqcup \big(  \gr_{2}^{(0)}\times \mathbb{R}^* \big)$ be the deformation to the normal cone of  the  pair $(\gr_{2}^{(0)},\gr_{1}^{(0)})$. The groupoid
\begin{equation}
\gr_{\varphi}\rightrightarrows\gr_{\varphi_0}
\end{equation}
with structure maps compatible  with the ones of the groupoids $\gr_2\rightrightarrows \gr_{2}^{(0)}$ and $\gr_1^N\rightrightarrows (\gr_{1}^{(0)})^N$, is a Lie groupoid with $C^{\infty}$-structures coming from  the deformation to the normal cone.
\end{proposition}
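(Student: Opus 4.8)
The plan is to produce the entire groupoid structure on $\gr_\varphi\rightrightarrows\gr_{\varphi_0}$ by applying the deformation to the normal cone functor $\mathscr{D}:\mathscr{C}_2^\infty\to\mathscr{C}^\infty$ of (\ref{fundnc}) to the structure maps of $\gr_2\rightrightarrows\gr_2^{(0)}$, exploiting that $\varphi$ is a morphism of Lie groupoids so that those maps are morphisms of pairs of $C^\infty$-manifolds $(\gr_2,\gr_1)$.

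First I would record that, since $\varphi$ and $\varphi_0$ are immersions intertwining all structure maps, the source and target maps give morphisms of pairs $s,t:(\gr_2,\gr_1)\to(\gr_2^{(0)},\gr_1^{(0)})$, the unit inclusion gives $u:(\gr_2^{(0)},\gr_1^{(0)})\to(\gr_2,\gr_1)$, and the inversion gives $\iota:(\gr_2,\gr_1)\to(\gr_2,\gr_1)$. Applying $\mathscr{D}$ and Proposition 3.4 of \cite{Ca4} yields $C^\infty$ maps $\mathscr{D}(s),\mathscr{D}(t):\gr_\varphi\to\gr_{\varphi_0}$, $\mathscr{D}(u):\gr_{\varphi_0}\to\gr_\varphi$ and $\mathscr{D}(\iota):\gr_\varphi\to\gr_\varphi$. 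By the definition of $\mathscr{D}$ on arrows these coincide over $t\neq0$ with the structure maps of $\gr_2\times\mathbb{R}^*$ and at $t=0$ with the normal-derivative maps, i.e. with the structure maps of $\gr_1^N\rightrightarrows(\gr_1^{(0)})^N$; in particular $\mathscr{D}(s)$ and $\mathscr{D}(t)$ are submersions because $s,t$ are submersions whose restrictions to $\gr_1$ are also submersions and $\mathscr{D}$ carries such ``submersions of pairs'' to submersions.

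The crucial point is the multiplication, for which I would isolate the lemma that $\mathscr{D}$ preserves transverse fibre products: if $f:(M,X)\to(P,Z)$ and $g:(M',X')\to(P,Z)$ are morphisms of pairs with $f\pitchfork g$ and $f|_X\pitchfork g|_{X'}$, then $X\times_Z X'\subset M\times_P M'$ is a submanifold, its normal bundle is canonically $\mathscr{N}(M,X)\times_{\mathscr{N}(P,Z)}\mathscr{N}(M',X')$, and
\[
\mathscr{D}(M\times_P M',\,X\times_Z X')\;\cong\;\mathscr{D}(M,X)\times_{\mathscr{D}(P,Z)}\mathscr{D}(M',X').
\]
This is verified in the $X$-slice charts using the explicit bijections (\ref{psi}) and (\ref{phi}). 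Applied to $f=s$, $g=t$ over $(P,Z)=(\gr_2^{(0)},\gr_1^{(0)})$ --- legitimate since $s$ and $t$ are submersions --- it identifies the space of composable pairs $\gr_\varphi^{(2)}$ with $\mathscr{D}(\gr_2^{(2)},\gr_1^{(2)})$, where $\gr_1^{(2)}\subset\gr_2^{(2)}$ are the composable-arrow spaces of $\gr_1$ and $\gr_2$. Since the multiplication is a morphism of pairs $m:(\gr_2^{(2)},\gr_1^{(2)})\to(\gr_2,\gr_1)$, applying $\mathscr{D}$ gives a $C^\infty$ map $\mathscr{D}(m):\gr_\varphi^{(2)}\to\gr_\varphi$ restricting to the multiplications of $\gr_2\times\mathbb{R}^*$ and of $\gr_1^N$.

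Finally the groupoid axioms follow from functoriality of $\mathscr{D}$: associativity and the unit and inverse laws for $\gr_2$ are commutative diagrams of morphisms of the pairs $(\gr_2,\gr_1)$ and their iterated composable-arrow pairs --- they hold as diagrams of pairs because they hold for $\gr_2$ and restrict to the corresponding identities for $\gr_1$, hence, after taking normal derivatives, for $\gr_1^N$ --- so applying the functor $\mathscr{D}$ (and the fibre-product identification of the composable-arrow spaces) turns them into commutative diagrams of $C^\infty$ maps on $\gr_\varphi$, which are exactly the groupoid axioms. Together with $\mathscr{D}(s),\mathscr{D}(t)$ being submersions this gives that $\gr_\varphi\rightrightarrows\gr_{\varphi_0}$ is a Lie groupoid with the stated $C^\infty$-structure, in which $\gr_1^N\times\{0\}$ is a closed saturated subgroupoid and $\gr_2\times\mathbb{R}^*$ an open one. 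The main obstacle --- the only place where genuine work is required, the rest being formal consequences of functoriality --- is the fibre-product lemma for $\mathscr{D}$, i.e. checking in $X$-slice charts that deformation to the normal cone commutes with fibre products along submersions.
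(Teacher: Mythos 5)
Your proposal is correct and follows the route the paper itself indicates: the paper offers no written proof of Proposition \ref{HSimmer}, merely citing Hilsum--Skandalis and declaring the statement ``an immediate consequence of the functoriality of the deformation to the normal cone construction.'' Your argument is exactly the honest expansion of that one line: applying $\mathscr{D}$ to $s,t,u$ and the inversion viewed as morphisms of pairs is indeed pure functoriality, and the groupoid axioms do transport formally. Where you go beyond the paper's stated justification --- and rightly so --- is in isolating the fibre-product lemma $\mathscr{D}(M\times_P M', X\times_Z X')\cong \mathscr{D}(M,X)\times_{\mathscr{D}(P,Z)}\mathscr{D}(M',X')$ for transverse (submersive) morphisms of pairs: without it the composable-arrow space $\gr_\varphi^{(2)}$ is not identified with a deformation to the normal cone, so the multiplication cannot be obtained as $\mathscr{D}(m)$, and this compatibility is genuinely not a consequence of functoriality alone but a chart computation with the bijections (\ref{psi}) and (\ref{phi}). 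This is precisely the content that the citation to \cite{HS} (and the more recent references such as \cite{Mohsen}) is standing in for. One small caveat: the paper, following Hilsum--Skandalis, allows $\varphi$ to be an immersion rather than an embedding, whereas your argument implicitly treats $\gr_1$ as a submanifold of $\gr_2$; the standard remedy (working locally, or factoring through the image when $\varphi$ is injective) is routine but worth a sentence if you want the statement in the generality in which it is quoted.
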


The Lie groupoid above will be called the normal groupoid associated to $\phi$, besides the article of Hilsum and Skandalis in which it appeared for the first time, these kind of groupoids were extensively used in \cite{CaWangAENS}, see also the recent article \cite{Mohsen} for more details on these groupoids.

One of the motivations of these kind of groupoids is to be able to define deformation indices.
Indeed, restricting the deformation to the normal cone construction to the closed interval $[0,1]$ and since the groupoid $\gr_2 \times (0,1]$ is an open saturated subgroupoid of $\gr_{\varphi}$ (see 2.4 in \cite{HS} or \cite{Ren} for more details), we have a short exact sequence of $C^*-$algebras
\begin{equation}\label{segtimm}
0 \rightarrow C^*(\gr_2 \times (0,1]) \longrightarrow C^*(\gr_{\varphi})
\stackrel{ev_0}{\longrightarrow}
C^*(\gr_1^N) \rightarrow 0,
\end{equation}
with $C^*(\gr_2 \times (0,1])$ contractible. Then the 6-term exact sequence in $K$-theory  provides the isomorphism
\[
(ev_0)_{*}:  K_*(C^*(\gr_\varphi))   \cong K_*(C^*(\gr_1^N)).
\]
 Hence
we can define the index morphism
$$D_{\varphi}:K_*(C^*(\gr_1^N))\longrightarrow K_*(C^*(\gr_2))$$
between the K-theories of the maximal $C^*$-algebras as the induced deformation index morphism
\[
D_{\varphi}:=(ev_1)_*\circ(ev_0)_{*}^{-1}: K_*(C^*(\gr_1^N)) \cong K_*(C^*(\gr_\varphi))  \longrightarrow  K_*(C^*(\gr_2)) .
\]

As we will see in the appendices, with the use of appropriate algebras, one can construct similar deformation morphisms in cyclic periodic (co)homology.

Another important example that will be used in this paper is constructed from the example of normal groupoid above and the functoriality of the deformation to the normal cone construction. Indeed, one can for instance consider the classic and most famous example of normal groupoid, the so-called tangent  or Connes groupoid of a manifold $M$, it is indeed the normal groupoid associated to the inclusion
$$M\to M\times M$$
of the diagonal, seen as a unit groupoid, in the pair groupoid of the manifold. Up to a (non canonical) identification of the normal bundle of this inclusion with the tangent bundle $TM$ the Connes tangent groupoid takes the following form
$$G_M^{tan}=TM\bigsqcup (M\times M) \times (0,1]\rightrightarrows M\times [0,1].$$
Now, in the presence of an action of a group $\Gamma$ on $M$ one can consider the associated diagonal action on $M\times M$. Since this action preserves the diagonal it is an immediate consequence of the functoriality of the deformation to the normal cone construction that there is an induced action of $\Gamma$ on the tangent groupoid $G_M^{tan}$ inducing a semi-direct deformation groupoid
$$G_M^{tan}\rtimes \Gamma\rightrightarrows M\times [0,1]$$
explicitly described in the appendices.

\subsection{Lie groupoid cohomology}\label{sectionLiegrpdcohomology}

In this section we will describe some (co)homology groups associated to Lie groupoids and some properties we will use later on the paper. The first three subsections are reviews of classic material, we are going to follow the nice paper/survey by K. Behrend ``Cohomology of Stacks", \cite{Beh}, for more details and proofs the reader can look at the more complete source \cite{BGNX}.

\subsubsection{The de Rham cohomology}

Let $G\rightrightarrows M$ be a Lie groupoid, we denote as usual by $G^{(p)}$ the manifold of $p$-composables arrows. In particular, $M=G^{(0)}$ and $G^{(1)}=G$. For every $p\in \mathbb{N}$ there are $p+1$ structural maps
$$G^{(p)}\stackrel{\partial_i}{\longrightarrow} G^{(p-1)}$$
for $i=0,..,p$, given by:
$$\partial_0(\gamma_1,...,\gamma_p)=(\gamma_2,...,\gamma_p),$$
$$\partial_p(\gamma_1,...,\gamma_p)=(\gamma_1,...,\gamma_{p-1}),$$
and, for $i=1,...,p-1$,
$$\partial_i(\gamma_1,...,\gamma_p)=(\gamma_1,...,\gamma_i\circ \gamma_{i+1},...,\gamma_p).$$
A direct computation shows that $\partial^2=0$ where
$$\partial:\Omega^q(G^{(p-1)})\longrightarrow \Omega^q(G^{(p)})$$
is given by $\partial:=\sum_{i=0,...,p}(-1)^i\partial_i^*$. For a fixed $q$,  $(\Omega^q(G^{(\cdot)}),\partial)$ is then a complex.
The exterior de Rham derivative $d:\Omega^q\to \Omega^{(q+1)}$ connects the various complexes with each other. These lead to a double complex
\begin{equation}
\xymatrix{
\vdots&\vdots&\vdots&\\
\Omega^2(G^{(0)})\ar[r]^-\partial\ar[u]^-d&\Omega^2(G^{(1)})\ar[u]^-d\ar[r]^-\partial&\Omega^2(G^{(2)})\ar[u]^-d\ar[r]^-\partial&\cdots\\
\Omega^1(G^{(0)})\ar[r]^-\partial\ar[u]^-d&\Omega^1(G^{(1)})\ar[u]^-d\ar[r]^-\partial&\Omega^1(G^{(2)})\ar[u]^-d\ar[r]^-\partial&\cdots\\
\Omega^0(G^{(0)})\ar[r]^-\partial\ar[u]^-d&\Omega^0(G^{(1)})\ar[u]^-d\ar[r]^-\partial&\Omega^0(G^{(2)})\ar[u]^-d\ar[r]^-\partial&\cdots
}
\end{equation}
with associated total complex
\begin{equation}
C^n_{dR}(G):=\bigoplus_{p+q=n}\Omega^q(G^{(p)})
\end{equation}
and total differential $\delta:C^n_{dR}(G)\to C^{n+1}_{dR}(G)$ given by
\begin{equation}
\delta(\omega)=\partial(\omega)+(-1)^pd(\omega),
\end{equation}
for $\omega\in \Omega^q(G^{(p)})$.

\begin{definition}[The de Rham cohomology of a Lie groupoid]
The complex $(C^\bullet_{dR}(G),\delta)$ is called the de Rham complex of the Lie groupoid $G\rightrightarrows M.$ Its cohomology groups
\begin{equation}
H^n_{dR}(G):=H^n(C^\bullet_{dR}(G),\delta)
\end{equation}
are called the de Rham cohomology groups of $G\rightrightarrows M$.
\end{definition}

\begin{remark}
As proved in \cite{Beh}, the above groups are invariant under Morita equivalence of Lie groupoids and hence they define groups associated to the corresponding stacks.
\end{remark}
\subsubsection{Cohomology with compact supports}

Let $G\rightrightarrows M$ be a Lie groupoid. Consider the following two numbers
$$r=dim\,G^{(1)}-dim\,G^{(0)}\,\,and\,\, n=2dim\,G^{(0)}-dim\,G^{(1)}=dim\,G^{(0)}-r.$$

Let $\Omega^q_c(G^{(p)})$ be the space of differential forms on $G^{(p)}$ with compact support. Consider the double complex
\begin{equation}\label{dRcomplexcompsupp}
\xymatrix{
\cdots	\ar[r]^-{\partial_!}&\Omega_c^{n+3r}(G^{(2)})\ar[r]^-{\partial_!}&\Omega_c^{n+2r}(G^{(1)})\ar[r]^-{\partial_!}&\Omega_c^{n+r}(G^{(0)})\\
\cdots	\ar[r]^-{\partial_!}&\Omega_c^{n+3r-1}(G^{(2)})\ar[r]^-{\partial_!}\ar[u]^-d&\Omega_c^{n+2r-1}(G^{(1)})\ar[u]^-{-d}\ar[r]^-{\partial_!}&\Omega_c^{n+r-1}(G^{(0)})\ar[u]^-d\\
\cdots	\ar[r]^-{\partial_!}&\Omega_c^{n+3r-2}(G^{(2)})\ar[r]^-{\partial_!}\ar[u]^-d&\Omega_c^{n+2r-2}(G^{(1)})\ar[u]^-{-d}\ar[r]^-{\partial_!}&\Omega_c^{n+r-2}(G^{(0)})\ar[u]^-d\\
&\vdots\ar[u]^-d&\vdots\ar[u]^-{-d}&\vdots \ar[u]^-d
}
\end{equation}
where the vertical differential is the usual exterior derivative $d$ and where the horizontal differential is defined in terms of
\begin{equation}\label{sigma}
\partial_!:=\sum_i(-1)^i(\partial_i)_!
\end{equation}
with $(\partial_i)_!:\Omega_c^{q+r}(G^{(p)})\to \Omega_c^{q}(G^{(p-1)})$ is obtained by integration along the fibers of the structural map $\partial_i$.

In fact, for $\gamma\in \Omega_c^q(G^{(p)})$, the horizontal differential is given by
$$\gamma\mapsto (-1)^p\partial_!\gamma.$$
The associated total complex is given by
\begin{equation}
C^\nu_c(G):=\bigoplus_{j=0,...,\nu}\Omega_c^{n+(j+1)r-\nu+j}(G^{(j)})
\end{equation}
with total differential
\begin{equation}
\delta(\gamma)=\partial_!(\gamma)+(-1)^{p}d(\gamma), \,\,\, for \,\,\, \gamma\in \Omega_c^{q}(G^{(p)}).
\end{equation}

\begin{definition}[The de Rham cohomology with compact supports of a Lie groupoid]
The complex $(C^\bullet_{c}(G),\delta)$ is called the compactly supported de Rham complex of the Lie groupoid $G\rightrightarrows M.$ Its homology groups
\begin{equation}
H^\nu_{c}(G):=H_\nu(C^\bullet_{c}(G),\delta)
\end{equation}
are called the compactly supported de Rham cohomology groups of $G\rightrightarrows M$.
\end{definition}

\subsubsection{Currents and Poincar\'e duality}\label{PDuality}

Given $\gamma\in \Omega^q(G^{(p)})$ and $\omega\in \Omega_c^{pr-q+dim\,G^{(0)}}(G^{(p)})$, the form
\begin{equation}
\gamma \cap \omega \in \Omega^{dim\, G^{(0)}}(G^{(0)})
\end{equation}
defined in \cite{Beh} page 12, allows to define for every such an $\gamma\in \Omega^q(G^{(p)})$ a current
\begin{equation}
C_\gamma: \Omega_c^{pr-q+dim\,G^{(0)}}(G^{(p)})\to \mathbb{R}
\end{equation}
given by
\begin{equation}
C_\gamma(\omega):=\int_{G^{(0)}}\gamma\cap\omega
\end{equation}

\begin{example}\label{examplecapproduct}
Let $G=M\rtimes \Gamma\rightrightarrows M$ be the transformation Lie groupoid associated to the action (by diffeomorphisms) of a discrete countable group $\Gamma$ on a smooth manifold $M$ with $dim\, M=n$. Let $p\in \mathbb{N}$ and let $f\in \mathbb{C}\Gamma^p$. We set $\gamma:=\pi^*f\in\Omega^0_c(M\times \Gamma^p)$ where $\pi:M\times \Gamma^p\to \Gamma^p$ is the projection on the second factor. Let $\omega\in \Omega^n_c(M\times \Gamma^p)$, in this example the form $\gamma\cap \omega\in \Omega^n_c(M)$ is explicitly given by
\begin{equation}
\pi_1!(\omega \wedge f)
\end{equation}
where $\pi_1:M\times \Gamma^p\to M$ is the projection in the first coordinate. In other words, for $m\in M$ we have
\begin{equation}
(\omega \cap \pi^*f)(m)=\sum_{g\in \Gamma^p} f(g)\cdot \omega(m,g).
\end{equation}
We can consider in particular the current defined by $f\in \mathbb{C}\Gamma^p$,
\begin{equation}
C_{f}(\omega):=\int_M \omega \cap \pi^*f.
\end{equation}
\end{example}

Following from Behrend's paper \cite{Beh}, one can check that the above pairing gives a well defined pairing on cohomology
\begin{equation}
H^k_{dR}(G)\times H^{n-k}_c(G)\to \mathbb{R},
\end{equation}
which is perfect (Proposition 19 in \cite{Beh}), hence giving Poincar\'e Duality, under the assumption that both $G^{(1)}$ and $G^{(0)}$ have compatible (via $s$ and $t$) finite good covers. This assumption holds whenever the groupoid is associated to a proper cocompact action.

\subsubsection{The de Rham cohomology with fiberwise rapidly decreasing support}

For some particular groupoids one might consider a slight generalization of the above compactly supported cohomology and pairing. We will detail here a particular example we use in this paper. Let $E\to M$ be a $\Gamma$-equivariant, proper vector bundle. We consider the action groupoid
$$G:=E\rtimes \Gamma \rightrightarrows E.$$
Let $\Omega_{sch}^q(G^{(p)})$ be the space of differential forms on $G^{(p)}\cong E\times \Gamma^p$ with compact horizontal support ({\it i.e.} in the direction of the base $M$) and with a Schwartz condition along the fibers of $E$. The differentials $\partial_!$ of Equation (\ref{sigma}) above are still well defined in $\Omega_{sch}^*(G^{(p)})$. We can hence consider the analog of the de Rham cohomology with compact supports, we call it the de Rham cohomology with Schwartz support
$$H^\nu_{sch}(E\rtimes \Gamma):=H^\nu(C^\bullet_{sch}(E\rtimes \Gamma),\delta).$$



\section{Pushforward maps in periodic delocalised cohomology and delocalised Riemman-Roch theorem}

\subsection{Periodic delocalised cohomology and pushforward maps}\label{delocpushforwardsubsection}
In the case when the action on $\Gamma$ in $M$ is proper, we consider, for $*=0,1 \,\, mod \,2$, the following associated periodic delocalised cohomology groups:
\begin{equation}
H^*_{\Gamma,deloc}(M):=\bigoplus_{\langle \Gamma\rangle^{fin}}\prod_{k=*,\,mod\, 2} H^k_c(M_g\rtimes \Gamma_g),
\end{equation}
where $\langle \Gamma\rangle^{fin}$ stands for the set of conjugacy classes of finite order elements of $\Gamma$, $g\in \Gamma$ is a fixed, finite order element of its conjugacy class, and $M_g\rtimes \Gamma_g$ stands for the action groupoid associated to the action by conjugation of the centralizer $\Gamma_g=\{h\in\Gamma: hg=gh\}$ on the fixed point submanifold $M_g=\{x\in M: x\cdot g=x\}$.

\begin{remark}
In this paper, whenever we deal with the total space $E$ of a $\Gamma$-proper vector bundle we will be using the Schwartz supported version of the de Rham cohomology groups, that is,
\begin{equation}
H^*_{\Gamma,deloc}(E):=\bigoplus_{\langle \Gamma\rangle^{fin}}\prod_{k=*,\,mod\, 2} H^k_{sch}(E_g\rtimes \Gamma_g).
\end{equation}
Here $E_g$ is $E$ restricted to $M_g.$
\end{remark}

Let $f:M\longrightarrow N$ be a $\Gamma$-equivariant $C^\infty$-map between two $\Gamma$-proper cocompact manifolds $M$ and $N$.
We will define below a pushforward map
\begin{equation}
f_!:H^{*-r_f}_{\Gamma,deloc}(M) \longrightarrow H^*_{\Gamma,deloc}(N),
\end{equation}	
where $r_f$ is the rank of the vector bundle $T_f=TM\oplus f^*TN$, and under the assumption that $T_f^*$ admits a $spin^c$-structure, in other words that means $f$ is $\Gamma $ $K$-oriented\footnote{In fact for the cohomological wrong way maps we could require only orientation but we need the $K$-orientation for the analog maps in $K$-theory.}.

One of the first situations in which we already have these pushforward morphisms is in the situation of a $\Gamma$-proper, $\Gamma$-$spin^c$ vector bundle over a $\Gamma$-manifold $M$. Indeed one can in this case consider the zero section map
$$s_0:M\to E$$
and let
$$(s_0)_!:=Th:
H^*_{\Gamma,deloc}(M)\stackrel{Th}{\longrightarrow}H^{*+r_E}_{\Gamma,deloc}(E) $$
as defined in Appendix \ref{appendixThom}, where $r_E$ is the vector bundle rank of $E$.

Before giving the definition of the general pushforward maps let us recall/introduce some spaces and groupoids that will appear in the construction as well as the deformation morphisms in periodic cyclic cohomology that are further studied in the appendix.

Hence, let $f:M\longrightarrow N$ be a $\Gamma$ $C^\infty$-map between two $\Gamma$-proper cocompact manifolds $M$ and $N$, we consider the $\Gamma$-vector bundle
$T_f:=TM\oplus f^*TN$ over $M$ and the deformation groupoid
\begin{equation}
D_f:TM\oplus f^*TN\bigsqcup(M\times M \times N) \times (0,1]
\end{equation}
given as the normal groupoid associated to the $\Gamma$-map
$$M\stackrel{\Delta\times f}{\longrightarrow}M\times M \times N.$$
As reviewed in the appendices, the functoriality of the deformation to the normal cone construction yields a semi-direct product groupoid
\begin{equation}
D_f\rtimes \Gamma \rightrightarrows f^*TN\bigsqcup M\times N\times (0,1]	
\end{equation}
that induces a deformation morphism
\begin{equation}
I_f:HP_*(\mathscr{S}(T_f\rtimes \Gamma))\longrightarrow HP_*(C_c^{\infty}((M\times M \times N)\rtimes \Gamma))
\end{equation}
defined as the composition of the inverse of the morphism induced by the evaluation at zero (as explained in the appendix it is an isomorphism by homotopy invariance of periodic cyclic cohomology)
$$
\xymatrix{
HP_*(\mathscr{S}_c(D_f\rtimes \Gamma))\ar[r]^-{e_0}_-\cong & HP_*(\mathscr{S}(T_f\rtimes \Gamma))
}
$$
followed by the morphism induced by the evaluation at $t=1$
$$HP_*(\mathscr{S}_c(D_f\rtimes \Gamma))\stackrel{(e_1)_*}{\longrightarrow} HP_*(C_c^{\infty}((M\times M \times N)\rtimes \Gamma)).$$

These deformation groupoids and their associated morphisms in $K$-theory (even twisted $K$-theory) appeared already in \cite{CaWangAENS}. For readers' convenience we will give full details in the appendices. We can now define the pushforward maps.

\begin{definition}[Pushward maps in delocalised cohomology with compact supports]\label{defpushforwardCPH}
Let $f:M\longrightarrow N$ be a $\Gamma$-equivariant, $K$-oriented, $C^\infty$-map between two $\Gamma$-proper cocompact manifolds $M$ and $N$. Let $r_f:=r_{T_f}$. We define the morphism
\begin{equation}
f_!:H^{*-r_f}_{\Gamma,deloc}(M) \longrightarrow H^*_{\Gamma,deloc}(N).
\end{equation}	
as the composition of the following morphisms
\begin{itemize}
\item The Thom isomorphism in delocalised cohomology
\begin{equation}
Th:H^{*-r_f}_{\Gamma,deloc}(M)\longrightarrow H^*_{\Gamma,deloc}(T_f^*),
\end{equation}
\item the Tu-Xu inverse isomorphism (explicitly described in \cite{TuXuChern} and recalled in Appendix \ref{appendixTuXu})
\begin{equation}
(TX)^{-1}:	H^*_{\Gamma,deloc}(T_f^*)\longrightarrow HP_*(\mathscr{S}(T_f^*\rtimes \Gamma)),
\end{equation}
\item the isomorphism
\begin{equation}
F: HP_*(\mathscr{S}(T_f^*\rtimes \Gamma))\longrightarrow HP_*(\mathscr{S}(T_f\rtimes \Gamma))
\end{equation}
induced from the Fourier algebra isomorphism,
\item the deformation morphism
\begin{equation}
I_f: HP_*(\mathscr{S}(T_f\rtimes \Gamma))\longrightarrow HP_*(C_c^{\infty}((M\times M \times N)\rtimes \Gamma)),
\end{equation}
\item the isomorphism induced from the Morita equivalence (isomorphism by Proposition 3.6 in \cite{PerrotMorita}) of groupoids $(M\times M \times N)\rtimes \Gamma\sim N\rtimes \Gamma$
\begin{equation}
\mathcal{M}: HP_*(C_c^{\infty}((M\times M \times N)\rtimes \Gamma))\longrightarrow HP_*(C_c^{\infty}(N\rtimes \Gamma)),
\end{equation}
and,
\item the Tu-Xu isomorphism
\begin{equation}
TX: HP_*(C_c^{\infty}(N\rtimes \Gamma))\longrightarrow H^*_{\Gamma,deloc}(N).
\end{equation}
\end{itemize}
\end{definition}

\subsection{Delocalised Riemman-Roch for proper actions}\label{subsectionRR}

As we will see now, one has for these pushforward morphisms the expected compatibility properties with respect to the Chern Character. For $M$ being a  $\Gamma$-proper manifold we consider the $\Gamma$-Chern character morphism

\begin{equation}\label{Cherndelocalised}
ch_M^\Gamma :K^*_{\Gamma}(M)\longrightarrow  H^*_{\Gamma,deloc}(M).
\end{equation}
given explicitly as the composition of the Connes-Chern character\footnote{Defined a priori in $K_*(C_c^\infty(M\rtimes \Gamma))$ but $C_c^\infty(M\rtimes \Gamma)\subset C^*(M\rtimes \Gamma)$ is stable under holomorphic calculus since the action is proper.}
\begin{equation}
Ch :K^*_{\Gamma}(M)\longrightarrow  HP_*(C_c^\infty(M\rtimes \Gamma)).
\end{equation}
followed by the Tu-Xu isomorphism (explicitly described in \cite{TuXuChern} and recalled in Appendix \ref{appendixTuXu})
\begin{equation}
\xymatrix{
HP_*(C_c^\infty(M\rtimes \Gamma))\ar[r]^-{TX}_-{\cong}& H^*_{\Gamma,deloc}(M).}
\end{equation}
In fact, Tu and Xu proved in \cite{TuXuChern} that the above Chern character (\ref{Cherndelocalised}) is an isomorphism up to tensoring with $\mathbb{C}$.

In particular for $M$ a $\Gamma$-proper manifold as above we have a component decomposition
\begin{equation}
ch_M^\Gamma(x)=(ch_M^g(x))_{g\in \langle \Gamma\rangle^{fin}}\in \bigoplus_{g\in \langle \Gamma\rangle^{fin}}\left(\bigoplus_{k=*,\,mod\, 2} H^k_c(M_g\rtimes \Gamma_g)\right)
\end{equation}
for every $x\in K^*_{\Gamma}(M)$.

For $E\longrightarrow M$  a $\Gamma$-proper $spin^c$ vector bundle over a $\Gamma$-proper manifold $M$ we can consider the Thom isomorphism in equivariant $K$-theory (as classically constructed by Kasparov, also reviewed in the appendix in \cite{CaWangAENS})
$$K^*_\Gamma(M)\longrightarrow K^{*+r_E}_\Gamma(E),$$
and for every $g\in \Gamma$ of finite order the delocalised Thom isomorphism (notice $r_E=r_{E_g}$ where $E_g$ is $E$ restricted to $M_g$)
$$Th_g:\bigoplus_{k=*,\,mod\, 2} H^k_c(M_g\rtimes \Gamma_g)\longrightarrow \bigoplus_{k=*,\,mod\, 2} H^{k+r_E}_{sch}(E_g\rtimes \Gamma_g),$$
given as the collection of Thom isomorphisms in each component $$H^k_c(M_g\rtimes \Gamma_g)\to H^{k+r_E}_{sch}(E_g\rtimes \Gamma_g)$$ (see Appendix E or \cite{BGNX} Chapter 4 for more details on these Thom isomorphisms).
We have then the delocalised $Todd$-class
$$Td_E^\Gamma:=(Td_g^E)_{g\in  \langle \Gamma\rangle^{fin}}\in \bigoplus_{g\in \langle G\rangle^{fin}}\left(\bigoplus_{k=*,\,mod\, 2} H^{k}(E_g\rtimes \Gamma_g)\right)$$
where each class $Td_g^E$ is as classic the unique class satisfying
\begin{equation}
Th_g(ch_M^g(x)\wedge Td_g^E)=ch_E^g(Th(x))
\end{equation}
for every $x\in K^*_{\Gamma}(M)$. See \cite{BCfete} Section 9 for more details and a precise definition of these classes.

We can now consider the twisted  $\Gamma$-Chern character morphism associated to $E$

\begin{equation}
ch_{Td_E^\Gamma} :K^*_{\Gamma}(M)\longrightarrow  H^*_{\Gamma,deloc}(M),
\end{equation}
given by
$$ch_{Td_E^\Gamma}(x):=ch_M^\Gamma(x)\wedge Td_E^\Gamma.$$

By construction of the delocalised Todd classes we immediately obtain the following result.

\begin{proposition}\label{RRThomthm}
Let $E\longrightarrow M$ be a $\Gamma$-proper $spin^c$ vector bundle over a $\Gamma$-proper cocompact  manifold $M$. The following diagram is commutative
\begin{equation}
\xymatrix{
K^*_{\Gamma}(M)\ar[d]_-{ch_{Td_E^\Gamma}}\ar[rr]^-{Th}_-\cong&&K^{*+r_E}_{\Gamma}(E)\ar[d]_-{ch_E^\Gamma}\\
 H^*_{\Gamma,deloc}(M)\ar[rr]_-{Th}^-\cong&&H^{*+r_E}_{\Gamma,deloc}(E)
}
\end{equation}
where the morphisms denoted by $Th$ stand for the respective Thom isomorphisms.
\end{proposition}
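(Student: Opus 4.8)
The plan is to deduce the commutativity of the square, one conjugacy class at a time, from the very equation that defines the delocalised Todd classes. First I would unwind the two compositions around the square on a class $x\in K^*_\Gamma(M)$: going right and then down produces $ch_E^\Gamma(Th(x))$, while going down and then right produces $Th\big(ch_M^\Gamma(x)\wedge Td_E^\Gamma\big)$, where the last $Th$ denotes the delocalised cohomology Thom isomorphism of Appendix~\ref{appendixThom}; so the assertion is the identity
\[
ch_E^\Gamma(Th(x))=Th\big(ch_M^\Gamma(x)\wedge Td_E^\Gamma\big)\qquad\text{for all }x\in K^*_\Gamma(M).
\]

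Next I would project both sides onto the $g$-component relative to the decompositions $H^*_{\Gamma,deloc}(M)=\bigoplus_{\langle\Gamma\rangle^{fin}}\prod_k H^k_c(M_g\rtimes\Gamma_g)$ and $H^{*+r_E}_{\Gamma,deloc}(E)=\bigoplus_{\langle\Gamma\rangle^{fin}}\prod_k H^{k+r_E}_{sch}(E_g\rtimes\Gamma_g)$, and observe that every piece in sight is the assembly of its $g$-local avatar: by definition $ch_M^\Gamma=(ch_M^g)_g$ and $ch_E^\Gamma=(ch_E^g)_g$; the delocalised Thom isomorphism is by construction the family $(Th_g)_g$; the wedge product on $H^*_{\Gamma,deloc}$ is the direct sum over $g$ of the module products $H^*_c(M_g\rtimes\Gamma_g)\otimes H^*(M_g\rtimes\Gamma_g)\to H^*_c(M_g\rtimes\Gamma_g)$, and $Td_E^\Gamma=(Td_g^E)_g$, where $Td_g^E$ is viewed as a class on $M_g\rtimes\Gamma_g$ through the homotopy equivalence $\pi^*\colon H^*(M_g\rtimes\Gamma_g)\xrightarrow{\ \cong\ }H^*(E_g\rtimes\Gamma_g)$. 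With these identifications the $g$-component of the displayed identity is exactly $ch_E^g(Th(x))=Th_g\big(ch_M^g(x)\wedge Td_g^E\big)$, which is the equation defining $Td_g^E$ recalled just before the statement (and in \cite{BCfete}, Section 9). Hence the square commutes.

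The one point that deserves care is not the square itself but the standing claim, imported rather than proved here, that the classes $Td_g^E$ exist and are unique: uniqueness because $Th_g$ is an isomorphism and $ch_M^g$ has full image after tensoring with $\mathbb{C}$ (Tu--Xu, \cite{TuXuChern}), so that testing the defining equation against a class $x_0$ with trivial $g$-Chern character $ch_M^g(x_0)=1$ already pins down $Td_g^E=Th_g^{-1}\big(ch_E^g(Th(x_0))\big)$; existence by running the classical multiplicativity argument of \cite{BCfete}, Section 9, in the proper cocompact action groupoid $M_g\rtimes\Gamma_g$, using that the equivariant K-theory Thom isomorphism of \cite{CaWangAENS} and the cohomological Thom isomorphism $Th_g$ are module homomorphisms over the rings pulled back from the base and that the Connes--Chern character composed with the Tu--Xu isomorphism is multiplicative. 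So the main, and essentially only, obstacle is bookkeeping: making sure the four global maps really are the assemblies of their $g$-local pieces, after which the proposition is immediate.
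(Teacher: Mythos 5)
Your proof is correct and follows essentially the same route as the paper: the paper declares the proposition "immediate by construction of the delocalised Todd classes," i.e. the commutativity of the square is, component by component in $g$, exactly the defining equation $Th_g(ch_M^g(x)\wedge Td_g^E)=ch_E^g(Th(x))$. Your additional remarks on existence and uniqueness of the $Td_g^E$ go slightly beyond what the paper records (it simply cites \cite{BCfete}, Section 9), but they do not change the argument.
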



{\bf Notation:} In the case where $M$ is a $\Gamma$-proper cocompact $spin^c$ manifold $M$ we denote by $Td_M^\Gamma$ the class associated to the cotangent space $T^*M$.


\vspace{2mm}

We can state and prove the compatibility of the above-defined pushforward maps and the delocalised twisted Chern characters. It is one of the main results in this paper.

\begin{theorem}[Delocalised Riemman-Roch]\label{RRthm}
Let $f:M\longrightarrow N$ be a $\Gamma$-equivariant, $K$-oriented, $C^\infty$-map between two $\Gamma$-proper cocompact manifolds $M$ and $N$. The following diagram is commutative
\begin{equation}
\xymatrix{
K^{*-r_f}_{\Gamma}(M)\ar[d]_-{ch_{Td_M^\Gamma}}\ar[r]^-{f_!}& K^*_{\Gamma}(N)\ar[d]^-{ch_{Td_N^\Gamma}} \\
H^{*-r_f}_{\Gamma,deloc}(M)\ar[r]_-{f_!}& H^*_{\Gamma,deloc}(N).
}
\end{equation}
\end{theorem}

\begin{proof}
By definition of the $Todd$-classes we are led to prove the commutativity of the following big diagram
\[
\xymatrix{
K^{*-r_f}_{\Gamma}(M)\ar@/_8pc/[dddddddddddddd]_-{f_!} \ar@{}[rrdd]_-{{\bf I}}\ar[dd]_-{Th}^-\cong \ar[rr]^-{ch_{Td_{T_f^*}^\Gamma}}&& H^{*-r_f}_{\Gamma,deloc}(M)\ar[dd]^-{Th}_-\cong \ar@/^8pc/[dddddddddddddd]^-{f_!}\\
&&\\
K^*_{\Gamma}(T_f^*) \ar[dd]_-{Id}^-\cong\ar[rr]^-{ch_{T_f^*}^\Gamma}\ar@{}[rrdd]_-{{\bf II}}&&H^*_{\Gamma,deloc}(T_f^*)\ar[dd]^-{(TX)^{-1}}_-\cong\\
&&\\
K^*_{\Gamma}(T_f^*) \ar[dd]_-{F}^-\cong\ar[rr]^-{Ch}\ar@{}[rrdd]_-{{\bf III}}&&HP_*(\mathscr{S}(T_f^*\rtimes \Gamma))\ar[dd]^-{F}_-\cong\\
&&\\
K^*_{\Gamma}(T_f) \ar[rr]^-{Ch}\ar@{}[rrdd]_-{{\bf IV}}&&HP_*(\mathscr{S}(T_f\rtimes \Gamma))\\
&&\\
K^*_{\Gamma}(D_f) \ar[uu]^-{(e_0)_*}_-\cong \ar[dd]_-{(e_1)_*} \ar[rr]^-{Ch}\ar@{}[rrdd]_-{{\bf V}}&&HP_*(\mathscr{S}_c(D_f\rtimes \Gamma))\ar[dd]^-{(e_1)_*}\ar[uu]_-{(e_0)_*}^-\cong \\
&&\\
K^*_{\Gamma}(M\times M\times N)\ar[dd]_-{\mathcal{M}}^-\cong \ar[rr]^-{Ch}\ar@{}[rrdd]_-{{\bf VI}}&&HP_*(C_c^\infty((M\times M\times N)\rtimes \Gamma))\ar[dd]^-{\mathcal{M}}_-\cong\\
&&\\
K^*_{\Gamma}(N) \ar[dd]_-{Id}^-\cong\ar[rr]^-{Ch}\ar@{}[rrdd]_-{{\bf VII}}&&HP_*(C_c^\infty(N)\rtimes \Gamma))\ar[dd]^-{TX}_-\cong\\
&&\\
K^*_{\Gamma}(N) \ar[rr]_-{ch_{N}^\Gamma}&&H^*_{\Gamma,deloc}(N)
}
\]
for which it will be enough to justify the commutativity of diagrams {\bf I-VII} above. Remember that the Connes-Chern character is natural, meaning that it commutes with morphisms induced from algebra morphisms, in particular we have then the commutativity of diagrams {\bf III-V}. The commutavity of diagrams {\bf II} and {\bf VII} is trivial, they follow from definition. Commutavity of diagram {\bf I} is precisely Proposition \ref{RRThomthm} above. Finally, we will justify the commutativity of diagram {\bf VI} which is a particular case of naturality of Connes-Chern character with respect to Morita isomorphisms. The Lie groupoid Morita equivalence
\begin{equation}
\xymatrix{
((M\times M)\times N)\rtimes \Gamma\ar[r]^-\sim& N\rtimes \Gamma
}
\end{equation}
induces the trace like function ({\it i.e.} $\tau(k*k')=\tau(k'*k)$)
\begin{equation}
\tau: C_c^\infty((M\times M\times N)\rtimes \Gamma))\longrightarrow C_c^\infty(N\rtimes \Gamma)
\end{equation}
given by
\begin{equation}
\tau(k)(n,g)=\int_Mk(m,m\cdot g, n, g)dm,
\end{equation}
and that induces the Morita equivalence of the correspondent $C^*$-algebras, the isomorphism $\mathcal{M}$ in $K$-theory and the isomorphism $\mathcal{M}$  (isomorphism by Proposition 3.6 in \cite{PerrotMorita}) at the level of $HP_*$. By definition of the Chern-Connes character (see for example \cite{Khal} Chapter 4 or \cite{Concg}), defined at the level of the smooth subalgebras $C_c^\infty((M\times M\times N)$ and $C_c^\infty(N\rtimes \Gamma)$, it is immediate that the diagram commutes, since $\tau$ is a trace.
\end{proof}

\section{Delocalised cohomology wrong way functoriality, Chern character for discrete groups}

\subsection{Delocalised cohomology wrong way functoriality}

As in the case of $K$-theory groups we can prove that the above-defined pushforward morphisms is functorial. This is one of our main results, stated as follows:

\begin{theorem}[Pushforward functoriality in delocalised cohomology]\label{wrongwayfunctorialitycohomology}
Let $f:M\longrightarrow N$ and $g:N\longrightarrow L$ be $\Gamma$ $K$-oriented $C^\infty$-maps between $\Gamma$-proper cocompact manifolds.. Then
\begin{equation}
(g\circ f)_!=g_!\circ f_!
\end{equation}
as morphisms from $H^*_{\Gamma,deloc}(M)$ to $H^{*+r_{g\circ f}}_{\Gamma,deloc}(L)$ ($r_{g\circ f}=r_f+r_g\,\, mod \, 2$).
\end{theorem}
\begin{proof}
For simplicity suppose $M$, $N$ and $L$ have the same dimension, otherwise we only have to add the shifting on the degrees as in the previous sections.

Given $f$ as in the statement, consider the morphism
\begin{equation}
f_*:H^*_{\Gamma,deloc}(M)\to H^*_{\Gamma,deloc}(N)
\end{equation}
given as the composition of
\begin{equation}
{\tiny
\xymatrix{
H^*_{\Gamma,deloc}(M)\ar[r]^-{(TX)^{-1}}&HP_*(C_c^{\infty}(M\rtimes \Gamma))\ar[r]^-{Th}&HP_*(\mathscr{S}(T_f^*\rtimes\Gamma))\ar[r]^-F&HP_*(\mathscr{S}(T_f\rtimes \Gamma))\\
\ar[r]^-{I_f}&HP_*(C_c^{\infty}((M\times M\times N)\rtimes\Gamma))\ar[r]^-{\mathcal{M}}&HP_*(C_c^{\infty}(N\rtimes \Gamma))\ar[r]^-{TX}& H^*_{\Gamma,deloc}(N)
}
}
\end{equation}
By Proposition \ref{RRThomthm} and the fact that the Tu and Xu Chern character (\ref{Cherndelocalised}) is an isomorphism once tensoring with
$\mathbb{C}$, we have that
\begin{equation}\label{f*=f!}
f_*(\cdot\wedge Td_{T_f^*}^\Gamma)=f_!.
\end{equation}
We briefly explain the last equation. The difference between $f_!$ and $f_*$ is in the first two maps used to define them, indeed $f_!$ and $f_*$ would be the equal if the diagram of isomorphisms
\begin{equation}
\xymatrix{
H^*_{\Gamma,deloc}(M)\ar[r]^-{Th}&H^*_{\Gamma,deloc}(T^*_f)\\
HP_*(C_c^\infty(M\rtimes \Gamma))\ar[u]^-{TX}\ar[r]_-{Th}&HP_*(\mathscr{S}(T^*_f\rtimes \Gamma))\ar[u]_-{TX}
}
\end{equation}
was commutative, where, as explained in Appendix \ref{appendixThom}, the Thom isomorphism in cyclic periodic homology above is considered to be in this paper as the unique isomorphism commuting (up to tensoring with $\mathbb{C}$) with the corresponding Thom isomorphism in $K$-theory up to the Chern-Connes character. Hence, by Proposition 3.3 and because the Tu and Xu Chern character (\ref{Cherndelocalised}) is an isomorphism once tensoring with
$\mathbb{C}$, we obtain that the above diagram is not commutative but that the failure to the commutativity is precisely the wedge product with the delocalised Todd class $Td_{T_f^*}$. Hence equation (\ref{f*=f!}) holds.

Now, because of the classical properties for the Todd classes we have
\begin{equation}
Td_{T_f^*}^\Gamma\wedge f^*Td_{T_g^*}^\Gamma=Td_{T_{g\circ f}^*}^\Gamma,
\end{equation}
and hence it is enough to prove the functoriality for the morphisms $f_*$.

Now, the pushforward map $f_*$ decomposes by definition as $(TX)\circ f_*^{HP}\circ(TX)^{-1}$ where
$$f_*^{HP}: HP_*(C_c^\infty(M\rtimes \Gamma))\longrightarrow  HP_*(C_c^\infty(N\rtimes \Gamma)),$$ and an analogous decomposition for $g_*$ can be considered. To conclude the proof it is then enough to show that
\begin{equation}
(g\circ f)_*^{HP}=g_*^{HP}\circ f_*^{HP}.
\end{equation}
The above pushforward functoriality is proved by following the same exact steps as the $K$-theory functoriality Theorem~4.2 in \cite{CaWangAENS} by replacing the $K$-theory functor with the periodic cyclic homology groups and by replacing the $C^*$-algebras with the smooth algebras considered in the appendices. Indeed, notice that proof of Theorem 4.2 in \cite{CaWangAENS} is based on the construction of appropriate groupoids (the same exact groupoids work in our present case) and cohomological properties of the $K$-theory groups that hold as well in periodic cyclic (co)homology, in particular naturality, Morita invariance, Thom isomorphisms and homotopy invariance.
\end{proof}

The above wrong way functoriality theorem allows us to consider in particular the following assembled group.

\begin{definition}[Delocalised topological cohomology for discrete groups]
For a discrete group $\Gamma$ and for $*=0,1\,\, mod\, 2$ we can consider the abelian group
\begin{equation}
H^*_{top}(\Gamma)=\varinjlim_{f_!} H^*_{\Gamma,deloc}(M).
\end{equation}
where the limit is taken over the $\Gamma$ proper cocompact $spin^c$ manifolds of dimension equal to $*$ modulo $2$.

Equivalently it can be defined as the free abelian group with generators $(M,\omega)$ with $M$ a $\Gamma$ proper cocompact $spin^c$ manifold and $\omega \in H^*_{\Gamma,deloc}(M)$, and the equivalence relation generated by $\omega\sim f_!(\omega)$ for $f:M\to N$ as above.
\end{definition}

\begin{remark}
Because the pushforward maps $f_!$  and $f_*^{HP}$ (Notation in proof of Theorem \ref{wrongwayfunctorialitycohomology}) differ by a Tu-Xu isomorphism it is immediate that
\begin{equation}
H^*_{top}(\Gamma)\cong \varinjlim_{f_*^{HP}} HP_*(C_c^{\infty}(M\rtimes \Gamma)),
\end{equation}
where the limit is taken over the $\Gamma$ proper cocompact $spin^c$ manifolds of dimension equal to $*$ modulo $2$.
As we will see in the next section, there is a natural way to map this group into the periodic cyclic group of the discrete group $\Gamma$.
\end{remark}

\subsection{The topological $K$-theory for discrete groups and the Chern character for discrete groups}

The definition of the delocalised topological cohomology for discrete groups follows the same pattern of the definition of the topological $K$-theory for discrete groups as originally defined by Baum and Connes. In this section we will recall the definition of the group $K_{top}^*(\Gamma)$. This group is the original ``Left Hand" side of the Baum-Connes assembly map, it appears first in the papers by Baum and Connes \cite{BCfete} and \cite{BC}. Now, in those papers the well definedness of the groups was sketched and not proved. The fact that it is well defined is a consequence of the wrong way funtoriality theorem proved in a more general context in \cite{CaWangAENS}. Indeed, in ref.cit., the authors constructed the pushforward map (with a possible shifting on the degree depending on the rank of the vector bundle $T_f$ as above)
\begin{equation}\label{pushforwardKtheory}
f_!:K_*(C_r^*(M\rtimes \Gamma))\longrightarrow K_*(C_r^*(N\rtimes \Gamma))
\end{equation}
associated to a $\Gamma$-equivariant $K-$oriented map $f:M\to N$ under the hypothesis that $M$ is a $\Gamma$-proper manifold. The main theorem of \cite{CaWangAENS}, Theorem 4.2, states that this construction is functorial, meaning that
\begin{equation}
(g\circ f)_!=g_!\circ f_!
\end{equation}
for $\Gamma$-equivariant maps $f$ and $g$. One of the main consequences of this wrong way functoriality is that it allows to define the following groups.

\begin{definition}[Topological $K$-theory for discrete groups]
For a discrete group $\Gamma$ and for $*=0,1\,\, mod\, 2$ we can consider the abelian group
\begin{equation}
K^*_{top}(\Gamma)=\varinjlim_{f_!} K_*(C_r^*(M\rtimes \Gamma)),
\end{equation}
where the limit is taken over the $\Gamma$ proper cocompact $spin^c$ manifolds of dimension equal to $*$ modulo $2$.

Equivalently the above group can be defined as the free abelian group with generators $(M,x)$ with $M$ a $\Gamma$ proper cocompact $spin^c$ manifold and $x\in K_*(C_r^*(M\rtimes \Gamma))$, and the equivalence relation generated by $x\sim f_!(x)$ for $f:M\to N$ as above.
\end{definition}

Now, since in the construction of $f_!$ in (\ref{pushforwardKtheory}) the manifold $N$ does not necessarily require to be proper and in particular it can be a point if $M$ is taken with a $\Gamma-spin^c$ structure, the pushforward functoriality theorem in \cite{CaWangAENS} also implies that the Baum-Connes assmebly map
\begin{equation}
K^*_{top}(\Gamma)\stackrel{\mu}{\longrightarrow} K_*(C^*_r(\Gamma))
\end{equation}
given by $\mu([M,x])=(\pi_M)!(x)$ is well defined, where $\pi_M:M\to \{pt\}$.

The constructions and proofs in \cite{CaWangAENS} were based in an essential way on the use of appropriate deformation groupoids and the use of classical (co)homological properties of $K$-theory such as homotopy invariance, naturality of Thom isomorphisms, Morita invariance, etc.

The following theorem is an immediate corollary of the delocalised Riemann-Roch Theorem \ref{RRthm} and of the push-forward functoriality theorems in $K$-theory (Theorem 4.2 in \cite{CaWangAENS}) and in periodic cyclic homology, Theorem \ref{wrongwayfunctorialitycohomology} above.

\begin{theorem}\label{Cherntopthm}
For any discrete group $\Gamma$, there is a well-defined Chern character morphism
\begin{equation}
ch^{\Gamma}: K^*_{top}(\Gamma) \longrightarrow H^*_{top}(\Gamma)
\end{equation}
given by
\begin{equation}
ch^{\Gamma}([M,x])=[M, ch^\Gamma_M(x)\wedge Td_M^\Gamma].
\end{equation}
Furthermore, it is an isomorphism once tensoring with $\mathbb{C}$.
\end{theorem}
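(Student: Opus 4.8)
The plan is to deduce Theorem \ref{Cherntopthm} directly from the three results already in hand: the delocalised Riemann-Roch Theorem \ref{RRthm}, the pushforward functoriality in K-theory (Theorem 4.2 in \cite{CaWangAENS}), and the pushforward functoriality in delocalised cohomology, Theorem \ref{wrongwayfunctorialitycohomology} above. The statement has two parts: well-definedness of $ch^{top}$, and the fact that it becomes an isomorphism after $\otimes\,\mathbb{C}$. These are handled separately, the first by a direct limit argument and the second by a five-lemma-type argument on the two inductive systems.

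First I would prove well-definedness. Recall that $K^*_{top}(\Gamma)=\varinjlim_{f_!} K_*(C_r^*(M\rtimes\Gamma))$ and $H^*_{top}(\Gamma)=\varinjlim_{f_!} H^*_{\Gamma,deloc}(M)$, both limits over the category of $\Gamma$-proper cocompact $spin^c$ manifolds of the appropriate dimension mod $2$, with morphisms the $K$-oriented $\Gamma$-maps and transition maps the respective shriek maps. The twisted Chern characters $ch_{Td_M^\Gamma}\colon K^*_\Gamma(M)\to H^*_{\Gamma,deloc}(M)$ assemble to a morphism of inductive systems: this is exactly the content of the commuting square in Theorem \ref{RRthm}, which says $f_!\circ ch_{Td_M^\Gamma}=ch_{Td_N^\Gamma}\circ f_!$ for every $K$-oriented $\Gamma$-map $f\colon M\to N$. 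Since a natural transformation of diagrams induces a map on colimits, we get a well-defined $ch^{top}\colon K^*_{top}(\Gamma)\to H^*_{top}(\Gamma)$, and chasing an element $[M,x]$ through the colimit formula gives precisely $ch^{top}([M,x])=[M,ch^\Gamma_M(x)\wedge Td^\Gamma_M]$, which is the asserted formula (here one uses $ch_{Td_M^\Gamma}(x)=ch^\Gamma_M(x)\wedge Td^\Gamma_M$ from Section \ref{subsectionRR}). Note that the functoriality Theorems 4.2 of \cite{CaWangAENS} and \ref{wrongwayfunctorialitycohomology} are what guarantee the two sides are genuinely well-defined inductive limits in the first place, so they are needed even to make sense of the claim.

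For the isomorphism statement after tensoring with $\mathbb{C}$, the idea is that the functor $-\otimes\mathbb{C}$ is exact and commutes with filtered colimits, so $K^*_{top}(\Gamma)\otimes\mathbb{C}=\varinjlim\big(K_*(C_r^*(M\rtimes\Gamma))\otimes\mathbb{C}\big)$ and similarly for $H^*_{top}(\Gamma)\otimes\mathbb{C}$. It therefore suffices to check that each component map $ch_{Td_M^\Gamma}\otimes\mathbb{C}$ is an isomorphism. But $ch_{Td_M^\Gamma}$ differs from the Tu--Xu $\Gamma$-Chern character $ch_M^\Gamma\colon K^*_\Gamma(M)\to H^*_{\Gamma,deloc}(M)$ only by the wedge with the delocalised Todd class $Td_M^\Gamma$, and $Td_M^\Gamma=1+\text{(higher degree terms)}$ is invertible in $H^*_{\Gamma,deloc}(M)$ (its degree-zero component is $1$, exactly as in the classical case; this is where one invokes the definition/properties of the delocalised Todd class from Section \ref{subsectionRR} and \cite{BCfete}). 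Hence wedging with $Td_M^\Gamma$ is an automorphism of $H^*_{\Gamma,deloc}(M)$, and $ch_{Td_M^\Gamma}\otimes\mathbb{C}$ is an isomorphism because $ch_M^\Gamma\otimes\mathbb{C}$ is, by the theorem of Tu and Xu \cite{TuXuChern} quoted in Section \ref{subsectionRR}. A filtered colimit of isomorphisms is an isomorphism, so $ch^{top}\otimes\mathbb{C}$ is an isomorphism.

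The only genuinely delicate point is verifying that $ch_{Td_M^\Gamma}$ really is a map of inductive \emph{systems} and not just a family of maps compatible with individual morphisms --- i.e. that the coherence holds for all composites, not merely for generating morphisms. This is not an obstacle in practice: Theorem \ref{RRthm} gives the square for every $K$-oriented $f$, and since both inductive systems are indexed over the same category (with the same morphisms), squares for individual morphisms paste to squares for composites automatically once we know (from Theorem 4.2 of \cite{CaWangAENS} and Theorem \ref{wrongwayfunctorialitycohomology}) that $f\mapsto f_!$ is itself functorial on both sides; so no further compatibility needs to be checked. A secondary small point worth spelling out is the degree bookkeeping: $f_!$ shifts degree by $r_f\bmod 2$ on both the K-theory and the cohomology side by the same amount, so the Chern character is compatible with the $\mathbb{Z}/2$-grading and the colimits can be formed degree by degree. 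With these remarks, the proof is essentially a formal consequence of the quoted results.
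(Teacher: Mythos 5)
Your proof is correct and follows exactly the route the paper intends: the paper states this theorem as an immediate corollary of Theorem \ref{RRthm} and the two pushforward functoriality theorems, and your colimit argument (Riemann--Roch squares give a morphism of inductive systems; each component map is an isomorphism after $\otimes\,\mathbb{C}$ by Tu--Xu plus invertibility of the Todd class) is the natural way to make that precise. One small imprecision worth noting: for $g\neq e$ the degree-zero component of the delocalised Todd class on the $M_g\rtimes\Gamma_g$ summand is not $1$ but a nonvanishing constant (involving the action of $g$ on the normal bundle of $M_g$), which is still all that is needed for wedging with $Td_M^\Gamma$ to be invertible.
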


\section{Comparison isomorphisms with the ``official" Baum-Connes left hand side models}

In this section we will prove that there are group isomorphisms
\begin{equation}
K^*_{top}(\Gamma)\cong K^\Gamma_*(\underline{E\Gamma}) \,\, \text{and}\,\, H^*_{top}(\Gamma)\cong H^\Gamma_*(\underline{E\Gamma}).
\end{equation}

This will justify that our groups are the good ones to unify several aspects and approaches for index theory in this context, also it will justify that the Chern character in Theorem~\ref{Cherntopthm} is the appropriate one because it is defined on the appropriate groups and explicitly computed in terms of cycles and it makes appear explicitly the delocalized version of Chern and Todd classes.

The first ingredient we need to recall is the classical duality between $K$-homology and $K$-theory. Indeed, for a $\Gamma$-proper cocompact $spin^c$-manifold $M$ there is a Poincar\'e Duality isomorphism between the $\Gamma$-equivariant $K$-homology of $M$ and the $\Gamma$-equivariant $K-$theory of $M$
\begin{equation}\label{PDproperactions}
\xymatrix{
K_*^\Gamma(M)\ar[r]^-{PD_M}_-\cong & K^*_\Gamma(M)
}
\end{equation}
that we will briefly recall; For more details on this duality see Section~4 of \cite{Kasparov88} or \cite{EM2010} for a more general discusion on the subject. In fact, we shall give a version of this duality in relation to the pushforward map in $K$-theory, following closely the work of Baum, Higson and Schick (see particularly Sections~7 and~8 in \cite{BHS}), where a geometric description of the equivariant $K$-homology groups is provided. Indeed, an element in $K_*^\Gamma(M)$ can be represented by the class (the equivalence relation is an obvious notion of bordism) of a triple $(N,f,x)$, where $N$ is a $\Gamma$-proper cocompact $spin^c$-manifold, $f:N\to M$ is a $\Gamma$-equivariant smooth submersion, and $x\in K^*_\Gamma(N)$. One can show directly, thanks to the results of Baum, Higson and Schick, the following classical result.

\begin{proposition}
\label{prop:KPD}
Let $M$ be a $\Gamma$-proper cocompact $spin^c$-manifold. The Poincar\'e Duality isomorphism between the $\Gamma$-equivariant $K$-homology of $M$ and the $\Gamma$-equivariant $K$-theory of $M$
\begin{equation}
\xymatrix{
K_*^\Gamma(M)\ar[r]^-{PD_M}_-\cong & K^*_\Gamma(M)
}
\end{equation}
is given by
\begin{equation}
PD_M([N,f,x])=f_!(x),
\end{equation}
for $[N,f,x]\in K_*^\Gamma(M)$ represented by a triple as above.
\end{proposition}

\begin{proof}
Given $f:N\to M$, a $\Gamma$-equivariant smooth submersion (between $\Gamma$-proper cocompact $spin^c$ manifolds), we consider the $\Gamma$-family index morphism
\begin{equation}
\xymatrix{
K^*_\Gamma(T^*N)\ar[r]^-{Ind_f} & K^*_\Gamma(M)
}
\end{equation}
constructed as classically (with the classical pseudodifferential extensions) by the transverse pseudodifferential calculus developed for example in \cite{HS} or in \cite{Kasparov16} (see also \cite{PPTtransverse} which covers this calculus as well). Similar to the non-equivariant case, the tangent groupoid technique can be applied to show that the following diagram commutes:
\begin{equation}
\xymatrix{
K^*_\Gamma(T^*N)\ar[r]^-{Ind_f} & K^*_\Gamma(M)\\
K^*_\Gamma(N)\ar[ru]_-{f_!}\ar[u]^-{Th}_-\cong &
}.
\end{equation}
Recall that under the Baum-Higson-Schick description we have that (see \cite{Kasparov16} for a $KK$-description)
$$PD_M([N,f,x])=Ind_f(\sigma(x\otimes [D_N)]),$$
where $\sigma: K_*^\Gamma(N)\to K^*_\Gamma(N)$ is the principal symbol class morphism. To conclude, we only need to observe that the morphism
$$K^*_\Gamma(N)\to K^*_\Gamma(T^*N)$$
given by
$$x\mapsto \sigma(x\otimes [D_N])$$
coincides exactly with the Thom isomorphism associated to the $\Gamma$-$spin^c$ structure on $N$.
\end{proof}

As an immediate corollary of Proposition~\ref{prop:KPD} and of the pushforward funtoriality in $K$-theory we obtain the following statement.

\begin{proposition}[Poincar\'e Duality vs. Pushforward]\label{PDproposition}
Let $g:M\to M'$ be a $\Gamma$-equivariant $C^\infty$-map between $\Gamma$-proper cocompact $spin^c$ manifolds. The following diagram commutes:
\begin{equation}
\xymatrix{
K_*^\Gamma(M)\ar[r]^-{g_*}\ar[d]_-{PD_M}^-\cong & K_*^\Gamma(M')\ar[d]^-{PD_{M'}}_-\cong\\
K^*_\Gamma(M)\ar[r]_-{g!}&K^*_\Gamma(M').
}
\end{equation}
\end{proposition}

By the universal property of the classifying space and the work of Baum, Higson and Schick (see last section in \cite{BHS}) we observe that the canonical morphisms
\begin{equation}\label{BHSisos}
\xymatrix{
\varinjlim_{f_*}K_*^\Gamma(M)\ar[r] & \varinjlim_{X\subset \underline{E\Gamma}}K_*^\Gamma(X) \ar[r]& K_*^\Gamma(\underline{E\Gamma})
}
\end{equation}
where the first limit is taken over the $\Gamma$-equivariant $K$-homology groups of $\Gamma$-proper cocompact $spin^c$ manifolds of dimension equal to $*$ modulo $2$ and where the second limit is taken over the $\Gamma$-equivariant K-homology groups of $\Gamma$-proper finite $CW$-complexes, are isomorphisms.

As an immediate corollary of the observation above and of Proposition~\ref{PDproposition}, we obtain the following result.

\begin{corollary}\label{ConnesvsLuck}
Let $\Gamma$ be a countable discrete group. There is a unique isomorphism
\begin{equation}
\xymatrix{
K^*_{top}(\Gamma)\ar[r]^-\lambda_-\cong & K_*^\Gamma(\underline{E\Gamma})
}
\end{equation}
fitting in the following commutative diagram of group isomorphisms
\begin{equation}
\xymatrix{
\varinjlim_{f_*}K_*^\Gamma(M)\ar[r]^-\cong \ar[d]_-{PD}^-\cong& \varinjlim_{X\subset \underline{E\Gamma}}K_*^\Gamma(X) \ar[d]^-\cong \\
K^*_{top}(\Gamma)\ar[r]_-\lambda^-\cong& K_*^\Gamma(\underline{E\Gamma})
}
\end{equation}
where $PD$ is the isomorphism induced from Proposition \ref{PDproposition}.
\end{corollary}


We will now use the results above together with Theorem \ref{Cherntopthm} to show that the delocalized cohomology for discrete groups can be computed as well by the equivariant homology of the classifying space for proper actions. The first thing is that there is a concrete isomorphism
\begin{equation}\label{Mattheysiso}
\xymatrix{
\bigoplus_{g\in \langle\Gamma\rangle^{fin}} K_*(B\Gamma_g)\otimes \mathbb{C}\ar[rr]^-{\phi_*^\Gamma}_-\cong &&K_*^\Gamma(\underline{E\Gamma})\otimes \mathbb{C}
}
\end{equation}
constructed by Matthey; See Section~7 and Theorem~1.3 in~\cite{Matthey}. In fact, Matthey showed how his morphism compares with Luck's Chern
character (\cite{Luck}), showing in particular that they can be intertwined by some precise isomorphisms and so Matthey's morphism is an
isomorphism because L\"uck's Chern character is; See Theorem~0.7 in~\cite{Luck}. In \cite{Luck} and \cite{Matthey} the authors could handle more general cases where coefficients belong to a proper subset of the complex numbers, which will not be needed here. Before coming back to the precise definition of the isomorphism (\ref{Mattheysiso}) in Section~\ref{sec:Chern-Baum-Connes}, note that we do have an isomorphism
\begin{equation}
\label{MattheysisoH}
H_*(\Gamma,F\Gamma)\cong H^*_{top}(\Gamma)
\end{equation}
where $H_*(\Gamma,F\Gamma)$ is the group homology of $\Gamma$ with coefficients in the complex vector space $F\Gamma$ generated by the elliptic elements. The isomorphism (\ref{MattheysisoH}) follows from (\ref{Mattheysiso}) because the left hand side of (\ref{Mattheysiso}) is isomorphic to $H_*(\Gamma,F\Gamma)$, by using for example the classic Chern character in K-homology, and also because the right hand side of (\ref{Mattheysiso}) is isomorphic to $H^*_{top}(\Gamma)$ by putting together Corollary \ref{ConnesvsLuck} and Theorem \ref{Cherntopthm}.

\section{The Chern-Baum-Connes assembly map and the Chern-Connes pairing formula for discrete groups}\label{sectionChernassembly}




\subsection{The Chern-Baum-Connes assembly map in periodic group homology and the pairing formula for discrete groups}
\label{sec:Chern-Baum-Connes}

We already have an explicit formula for the Chern character
\begin{equation}
ch^\Gamma: K^*_{top}(\Gamma) \longrightarrow H^*_{top}(\Gamma)
\end{equation}
given in Theorem \ref{Cherntopthm}. In the following, we propose an explicit geometric description of the cohomological assembly map
\begin{equation}
\mu_{\Gamma}: H^*_{top}(\Gamma)\longrightarrow HP_*(\mathbb{C}\Gamma),
\end{equation}
that will lead, by composing with the above Chern character, to explicit index formulas.
Let us start by denoting, for $*=0,1$,
\begin{equation}
\label{eq:HFGamma}
H_*(\Gamma,F\Gamma):=\left( \bigoplus_{\langle \Gamma\rangle^{fin}}\bigoplus_{k=*,\,mod\, 2}H_k(\Gamma_g;\mathbb{C})\right).
\end{equation}
In principle, in the literature, the left hand side of (\ref{eq:HFGamma}) denotes the group homology of $\Gamma$ with coefficients in the complex vector space $F\Gamma$ generated by the elliptic elements, but this group homology is isomorphic to the right hand side of (\ref{eq:HFGamma}); See Matthey \cite{Matthey} Appendix~C for a discussion of this isomorphism using the Shapiro's Lemma. From now on we shall regard~(\ref{eq:HFGamma}) as a definition for $H_*(\Gamma,F\Gamma)$.

Then, as we recalled in the introduction, the periodic cyclic homology $HP_*(\mathbb{C}\Gamma)$ decomposes into the direct sum of two factors, one of which is isomorphic to the group $H_*(\Gamma,F\Gamma)$ in (\ref{eq:HFGamma}). In fact, we will recall and use below an isomorphism (denoted $B$ as Burghelea) onto its image
\begin{equation}
B:H_*(\Gamma,F\Gamma)\longrightarrow HP_*(\mathbb{C}\Gamma),
\end{equation}
as the above mentioned direct factor. As starting point we will present in the next theorem a geometric assembly map
\begin{equation}
\mu_{F\Gamma}: H^*_{top}(\Gamma)\longrightarrow H_*(\Gamma,F\Gamma)
\end{equation}
explicitly formulated in terms of integration along the fibers of compactly supported forms.

Let us explain this in detail. Let $M$ be a $\Gamma$-proper cocompact $spin^c$ manifold of dimension $n$ and let $\omega\in \Omega_c^n(M\times \Gamma^p)$, for $p\in \mathbb{N}$. We consider
$\pi_M!(\omega)\in \mathbb{C}\Gamma^p$ given by
\begin{equation}
\pi_M!(\omega)(\gamma):=\int_M\omega(m,\gamma)
\end{equation}
for $\gamma\in \Gamma^p$. We have the following simple but illustrative lemma.
\begin{lemma}\label{lemmaintegrationfibers}
Let $M$ be a $\Gamma$-proper cocompact $spin^c$ manifold of dimension $n$. For every $p\in\mathbb{N}$, the map
\begin{equation}
\pi_M!:\Omega_c^n(M\times \Gamma^p)\longrightarrow \mathbb{C}\Gamma^p
\end{equation}
induce a well-defined morphism
\begin{equation}
\pi_M!:H_c^p(M\rtimes \Gamma)\longrightarrow H_p(\Gamma).
\end{equation}

\end{lemma}
\begin{proof}
For $G=M\rtimes \Gamma \rightrightarrows M$, the de Rham complex with compact supports (\ref{dRcomplexcompsupp}), where $dim\, M=n$, admits the following form
\begin{equation}\label{dRcomplexcompsuppMtimesGamma}
\xymatrix{
\cdots	\ar[r]^-{\partial_!}&\Omega_c^{n}(M\times \Gamma^2)\ar[r]^-{\partial_!}&\Omega_c^{n}(M\times\Gamma)\ar[r]^-{\partial_!}&\Omega_c^{n}(M)\\
\cdots	\ar[r]^-{\partial_!}&\Omega_c^{n-1}(M\times\Gamma^2)\ar[r]^-{\partial_!}\ar[u]^-d&\Omega_c^{n-1}(M\times\Gamma)\ar[u]^-{-d}\ar[r]^-{\partial_!}&\Omega_c^{n-1}(M)\ar[u]^-d\\
&\vdots\ar[u]^-d&\vdots\ar[u]^-{-d}&\vdots \ar[u]^-d
}
\end{equation}
because $(M\rtimes \Gamma)^{(p)}$ identifies canonically with $M\times \Gamma^p$ and $r=0$ in this case. Under this identification one can directly compute the compatibility between the top horizontal complex in (\ref{dRcomplexcompsuppMtimesGamma}) and the complex defining group homology
\begin{equation}
\xymatrix{
\cdots	\ar[r]^-{\partial_!}&\mathbb{C} \Gamma^2\ar[r]^-{\partial_!}&\mathbb{C}\Gamma\ar[r]^-{\partial_!}&\mathbb{C}\\
}
\end{equation}
via the maps $\pi_M$, that is, a direct computation verifies $\pi_M\circ \partial_!=\partial_!\circ \pi_M$ since the structural maps from
$M\times \Gamma^p$ to $M\times \Gamma^{p-1}$ defining $\partial_!$ are given precisely by the structural maps for the group $\Gamma$ in the second coordinate.
Now, for forms in $\Omega_c^q(M\times \Gamma^p)$ with $q< n$, the map $\pi_M$ is defined as zero. Hence, in order to conclude we just need to observe that $\pi_M \circ d=0$, which is a result of classic integration along the fibers in de Rham cohomology for manifolds.
\end{proof}
Applying Lemma~\ref{lemmaintegrationfibers} to every $M_g\rtimes \Gamma_g$ (for a fixed $g$ in its conjugacy class) for finite order elements $g$, we get a well-defined morphism, for $*=0,1$,
\begin{equation}
\xymatrix{
H^*_{\Gamma,deloc}(M)\ar[r]^-{\pi_M!}&H_*(\Gamma,F\Gamma),
}
\end{equation}
for every $M$ as above of dimension $n=*, mod\, 2$. We use the same notation $\pi_M!$ for the delocalized and localized integration along the fibers, because it should be clear from the context.

We are ready to state and prove the main theorem of this section.

\begin{theorem}\label{thmmuF}
The maps
\begin{equation}
\xymatrix{
H^*_{\Gamma,deloc}(M)\ar[r]^-{\pi_M!}&H_*(\Gamma,F\Gamma),
}
\end{equation}
induce a well-defined assembly map
\begin{equation}
\mu_{F\Gamma}: H^*_{top}(\Gamma)\longrightarrow H_*(\Gamma,F\Gamma)
\end{equation}
given by
\begin{equation}
\mu_{F\Gamma}([M,\omega])=\pi_M!(\omega).
\end{equation}
\end{theorem}

The proof of the theorem will require some preliminary lemmas and propositions. Before stating the first lemma, recall that for a $\Gamma$-manifold $M$ (proper, cocompact and $spin^c$) there is also a canonical pullback morphism
\begin{equation}
\xymatrix{
H^*(\Gamma)\ar[r]^-{\pi_M^*}&H^*(M\rtimes \Gamma),
}
\end{equation}
induced from the canonical groupoid morphism
$$M\rtimes \Gamma \to \Gamma.$$

\begin{lemma}\label{auxlemma0}
Let $M$ be a $\Gamma$-proper cocompact $spin^c$ manifold. We have the following equality of morphisms
\begin{equation}
(\pi_M!)^*=\pi_M^*.
\end{equation}
\end{lemma}
\begin{proof}
 Every functional $\tilde{h}:\mathbb{C}\Gamma^p\to \mathbb{C}$ is completely determined by an element $h\in \mathbb{C}\Gamma^p$, explicitly given by $\tilde{h}(\sum a_\gamma\gamma)=\sum a_\gamma h(\gamma)$.

For every $f\in \mathbb{C}\Gamma^p$ there is an associated current $C_{\pi_M^*f}: \Omega^n(M\times \Gamma^p)\to \mathbb{C}$ explicitly described in Example \ref{examplecapproduct} and from which we have
\begin{equation}
(\pi_M)^*(\tilde{f}) (\omega):=C_{\pi_M^*f}(\omega)=\int_M \omega \cap \pi^*f=\sum_\gamma \left(\int_M\omega(m,\gamma)\cdot f(\gamma)\right)=\tilde{f}(\pi_M!(\omega))=:(\pi_M!)^*(\tilde{f})(\omega).
\end{equation}
for every $\omega\in \Omega^n_c(M\times \Gamma^p)$. This concludes the proof.
\end{proof}

Now we will need some extra terminology and recall some basic facts on free proper actions. In fact, the general strategy will be to justify, using Matthey and L\"uck's results, that one can reduce the computations to free and proper cycles.

Let $M$ be a $\Gamma$-free proper cocompact $spin^c$ manifold. In this case $M/\Gamma$ has a $C^\infty$-structure compatible with its quotient structure. Moreover, $M\rtimes \Gamma$ and $M/\Gamma$ (seen as unit groupoids) are Morita equivalent Lie groupoids. Then there is an induced isomorphism
\begin{equation}
\label{eq:MoritaH}
\xymatrix{
H^*_c(M\rtimes \Gamma) \ar[r]^-M_-\cong& H_{dR}^*(M/\Gamma),
}
\end{equation}
because de Rham groupoid cohomology (compactly supported as well) is indeed Morita invariant; See for example Behrend's \cite{Beh} or \cite{BGNX}.

Now, we can consider the isomorphism
\begin{equation}
\xymatrix{
H^*_c(M\rtimes \Gamma) \ar[r]^-{M_{PD}}_-\cong& H^{dR}_*(M/\Gamma).
}
\end{equation}
given by the composition of $M$ in (\ref{eq:MoritaH}) followed by the Poincar\'e Duality isomorphism (for the compact smooth manifold $M/\Gamma$)
\begin{equation}
\xymatrix{
H^*_{dR}(M/\Gamma) \ar[r]^-{PD}_-\cong& H^{dR}_*(M/\Gamma).
}
\end{equation}

We can state the following lemma about free and proper actions, the proof of which is an immediate consequence of Lemma~\ref{auxlemma0}.

\begin{lemma}\label{auxlemma1}
Let $M$ be a $\Gamma$-free proper cocompact $spin^c$ manifold. The following diagram is commutative
\begin{equation}
\xymatrix{
H^*_c(M\rtimes \Gamma) \ar[d]_-{M_{PD}}^-\cong \ar[r]^-{\pi_M!}& H_*(\Gamma) \\
H^{dR}_*(M/\Gamma)\ar[ru]_-{(\pi_M)_*} &
},
\end{equation}
where
\begin{equation}
\xymatrix{
H_*(M/\Gamma)\ar[r]^-{(\pi_M)_*}& H_*(\Gamma)
}
\end{equation}
is the universal morphism induced from the continuous universal classifying map
\begin{equation}
M/\Gamma \stackrel{\pi_M}{\longrightarrow} B\Gamma.
\end{equation}
\end{lemma}

We will also need to compare our pushforward maps in delocalized cohomology with the usual pushforward maps in homology whenever this makes sense. The result is the following.

\begin{lemma}\label{auxlemma2}
Let $f:M\to M'$ be a $\Gamma$-equivariant $C^\infty$-map between two  $\Gamma$-free proper cocompact $spin^c$ manifolds. The following diagram commutes
\begin{equation}
\xymatrix{
H^*_c(M\rtimes \Gamma) \ar[d]_-{M_{PD}}^-\cong \ar[r]^-{f_!}& H^*_c(M'\rtimes \Gamma) \ar[d]^-{M_{PD}}_-\cong \\
H^{dR}_*(M/\Gamma)\ar[r]_-{\overline{f}_*} & H^{dR}_*(M'/\Gamma),
}
\end{equation}
where $\overline{f}:M/\Gamma\to M'/\Gamma$ is the smooth map associated to $f$.
\end{lemma}

\begin{proof}
The first thing to recall is the commutative diagram involving Poincar\'e Duality 
\begin{equation}
\xymatrix{
H_{dR}^*(M/\Gamma)\ar[d]_-{PD}^-\cong\ar[r]^-{\overline{f}_!} & H_{dR}^*(M'/\Gamma)\ar[d]^-{PD}_-\cong\\
H^{dR}_*(M/\Gamma)\ar[r]_-{\overline{f}_*} & H^{dR}_*(M'/\Gamma),
}
\end{equation}
classically associated to smooth maps between smooth compact manifolds.
Then we need to justify the commutativity of the following diagram
\begin{equation}\label{Moritaf}
\xymatrix{
H_c^*(M\rtimes\Gamma)\ar[d]_-M^-\cong\ar[r]^-{f_!} & H_c^*(M'\rtimes\Gamma)\ar[d]^-{M}_-\cong \\
H_{dR}^*(M/\Gamma)\ar[r]_-{\overline{f}_!} & H_{dR}^*(M'/\Gamma).
}
\end{equation}
For this let us consider the associated diagram in $K$-theory
\begin{equation}
\label{eq:Morita!}
\xymatrix{
K^*(M\rtimes\Gamma)\ar[d]_-M^-\cong\ar[r]^-{f_!} & K^*(M'\rtimes\Gamma)\ar[d]^-{M}_-\cong \\
K^*(M/\Gamma)\ar[r]_-{\overline{f}_!} & K^*(M'/\Gamma).
}
\end{equation}
By standard $K$-theory techniques this last diagram is commutative. Indeed, the Morita bi-modules induce isomorphisms in $K$-theory that are compatible with the Thom isomorphisms and are natural. Hence, the isomorphisms are compatible with the shriek morphisms constructed using deformation groupoids.
Now, the commutativity of the diagram (\ref{Moritaf}) can be verified by diagram chasing using the K-theoretical diagram (\ref{eq:Morita!}) tensored with $\mathbb{C}$ together with the twisted Chern isomorphisms and the commutativity given by the corresponding Riemann-Roch theorems.
\end{proof}

For the next preliminary result we need to introduce a new $K$-theory group. For $\Gamma$, a countable discrete group we have already investigated the groups
$K^*_{top}(\Gamma)$. Now we will also consider the groups
\begin{equation}
K^*_{topf}(\Gamma)
\end{equation}
defined in an analog way as $K^*_{top}(\Gamma)$ but only used free proper manifolds instead  of proper ones. There is a canonical morphism
\begin{equation}
\varphi: K_{topf}^*(\Gamma)\longrightarrow K^*_{top}(\Gamma)
\end{equation}
given by forgetting that the action is free. We have the following proposition whose proof follows the same lines as Corollary~\ref{ConnesvsLuck}, but by considering instead free and proper actions.

\begin{proposition}\label{ConnesvsLuckfree}
Let $\Gamma$ be a countable discrete group. There is a unique isomorphism
\begin{equation}
\xymatrix{
K^*_{topf}(\Gamma)\ar[r]^-\lambda_-\cong & K_*^\Gamma(E\Gamma)\cong K_*(B\Gamma)
}
\end{equation}
fitting in the following commutative diagram of group isomorphisms
\begin{equation}
\xymatrix{
K_*(B\Gamma)\ar[r]^-\varphi & K_*^\Gamma(\underline{E\Gamma}) \\
K^*_{topf}(\Gamma)\ar[u]^-\lambda_-\cong \ar[r]_-\varphi & K_{top}^*(\Gamma)\ar[u]_-\lambda^-\cong
}
\end{equation}
where the morphism $\lambda$ on the right is the one in Corollary \ref{ConnesvsLuck} and where the morphism $\varphi$ on the top is the natural morphism associated to classifying spaces.
\end{proposition}

Now, remember we have cited above a concrete isomorphism (see (\ref{Mattheysiso}))
\begin{equation}
\xymatrix{
\bigoplus_{g\in \langle\Gamma\rangle^{fin}} K_*(B\Gamma_g)\otimes \mathbb{C}\ar[rr]^-{\phi_*^\Gamma}_-\cong &&K_*^\Gamma(\underline{E\Gamma})\otimes \mathbb{C}
}
\end{equation}
constructed by Matthey. See Section 7 and Theorem 1.3 in \cite{Matthey}.

We let
\begin{equation}\label{morphismphi}
\xymatrix{
\bigoplus_{g\in \langle\Gamma\rangle^{fin}} K_{topf}^*(\Gamma_g)\otimes \mathbb{C}\ar[rr]^-{\phi_*^\Gamma}_-\cong &&K_{top}^*(\Gamma)\otimes \mathbb{C}
}
\end{equation}
be the unique isomorphism fitting the following commutative diagram
\begin{equation}
\xymatrix{
\bigoplus_{g\in \langle\Gamma\rangle^{fin}} K_*(B\Gamma_g)\otimes \mathbb{C}\ar[rr]^-{\phi_*^\Gamma}_-\cong &&K_*^\Gamma(\underline{E\Gamma})\otimes \mathbb{C} \\
\bigoplus_{g\in \langle\Gamma\rangle^{fin}} K_{topf}^*(\Gamma_g)\otimes \mathbb{C}\ar[u]^-\lambda_-\cong\ar[rr]_-{\phi_*^\Gamma}^-\cong &&K_{top}^*(\Gamma)\otimes \mathbb{C}. \ar[u]_-\lambda^-\cong
}
\end{equation}

Now, the strategy of proving Theorem \ref{thmmuF} can be used to justify that
\begin{equation}\label{strategyML}
\xymatrix{
\bigoplus_{g\in \langle\Gamma\rangle^{fin}} K_{topf}^*(\Gamma_g)\otimes \mathbb{C} \ar[rr]^-{ch_{top}\circ \phi_*^\Gamma}&&H_{top}^*(\Gamma)\ar[r]^-{\mu_{F\Gamma}}& H_*(\Gamma,F\Gamma)
}
\end{equation}
is well-defined, which will imply, since $ch_{top}\circ \phi_*^\Gamma$ is a well-defined isomorphism, that $\mu_{F\Gamma}$ is well-defined. For this we will then need an explicit description of $\phi_*^\Gamma$ which is possible thanks to the work of Matthey. Indeed, in Section~7 of \cite{Matthey} Matthey defined (Definition~7.2 ref.cit.)
\begin{equation}
\xymatrix{
\bigoplus_{g\in \langle\Gamma\rangle^{fin}} K_*(B\Gamma_g)\otimes \mathbb{C}\ar[rr]^-{\phi_*^\Gamma}_-\cong &&K_*^\Gamma(\underline{E\Gamma})\otimes \mathbb{C}
}
\end{equation}
explicitly as the composition
{\tiny
\begin{equation}
\label{eq:psi.chi.j}
\xymatrix{
\bigoplus_{g\in \langle\Gamma\rangle^{fin}} K_*(B\Gamma_g)\otimes \mathbb{C} \ar[r]^-\varphi &\bigoplus_{g\in \langle\Gamma\rangle^{fin}} K^{\Gamma_g}_*(\underline{E\Gamma_g})\otimes \mathbb{C}\ar[r]^-\chi_-\cong & \bigoplus_{g\in \langle\Gamma\rangle^{fin}} K^{\Gamma_g}_*(\underline{E\Gamma_g})\otimes \mathbb{C}
\ar[r]^-j & K_*^\Gamma(\underline{E\Gamma})\otimes \mathbb{C}
}
\end{equation}
}
where $\varphi$ is the canonical map associated to the canonical map $E\Gamma_g\to \underline{E\Gamma_g}$, $j$ is the morphism associated to the group inclusions $\Gamma_g\to \Gamma$, and 
\begin{equation}\label{isochiMatthey}
\xymatrix{
\bigoplus_{g\in \langle\Gamma\rangle^{fin}} K^{\Gamma_g}_*(\underline{E\Gamma_g})\otimes \mathbb{C}\ar[r]^-\chi_-\cong & \bigoplus_{g\in \langle\Gamma\rangle^{fin}} K^{\Gamma_g}_*(\underline{E\Gamma_g})\otimes \mathbb{C}
}
\end{equation}
is an isomorphism that we now describe. Matthey started by showing that if $g$ acts trivially on a Hausdorff proper $\Gamma_g$-space $X$, then there is a canonical isomorphism (Lemma~8.1 in \cite{Matthey})
\begin{equation}\label{decompostionMattheylemma8.1}
K^{\Gamma_g}_*(X)\cong \bigoplus_{l=0,...,n_g-1} K^{\Gamma_g}_*(X)_l
\end{equation}
where
\begin{equation}
K^{\Gamma_g}_*(X)_l:= \{[H,\pi, F]: g \, \,\text{acts on}\,\,  H \,\,\text{by}\,\, \omega_g^l \}
\end{equation}
with $\omega_g=e^{2\pi i/n_g}$ and with the usual notations for Kasparov cycles. As detailed in Appendix~B of \cite{Matthey}, there is at least one model for $\underline{E\Gamma_g}$ on which $g$ acts trivially. In Proposition 8.2 in ref.cit. he computed explicitly the morphism $\chi$ above with respect to the decomposition (\ref{decompostionMattheylemma8.1}), he showed that
\begin{equation}
\chi_{g,l}(x\otimes \lambda)=x\otimes \omega_g^l\cdot \lambda,
\end{equation}
where, with respect to the decomposition above,
\begin{equation}\label{decompositionchiMatthey}
\chi=\bigoplus_{g\in \langle\Gamma\rangle^{fin},l=0,...,n_g} (\chi)_{g,l}.
\end{equation}
In particular he obtained that $\chi$ is an isomorphism with $\chi$ itself as an inverse.

Coming back to our situation, we use Proposition~\ref{ConnesvsLuck} to get a unique isomorphism
\begin{equation}\label{chitopiso}
\xymatrix{
\bigoplus_{g\in \langle\Gamma\rangle^{fin}} K^*_{top}(\Gamma_g)\otimes \mathbb{C}\ar[r]^-\chi_-\cong & \bigoplus_{g\in \langle\Gamma\rangle^{fin}} K^*_{top}(\Gamma_g)\otimes \mathbb{C}
}
\end{equation}
fitting in the commutative diagram
\begin{equation}\label{decompositionchiMattheyTOP}
\xymatrix{
\bigoplus_{g\in \langle\Gamma\rangle^{fin}} K^{\Gamma_g}_*(\underline{E\Gamma_g})\otimes \mathbb{C}\ar[r]^-\chi_-\cong & \bigoplus_{g\in \langle\Gamma\rangle^{fin}} K^{\Gamma_g}_*(\underline{E\Gamma_g})\otimes \mathbb{C}\\
\bigoplus_{g\in \langle\Gamma\rangle^{fin}} K^*_{top}(\Gamma_g)\otimes \mathbb{C}\ar[u]^-\lambda_-\cong\ar[r]_-\chi^-\cong & \bigoplus_{g\in \langle\Gamma\rangle^{fin}} K^*_{top}(\Gamma_g)\otimes \mathbb{C}, \ar[u]_-\lambda^-\cong
}
\end{equation}
but more importantly admitting a decomposition (if we take $\underline{E\Gamma_g}$ on which $g$ acts trivially as above)
{\tiny
\begin{equation}\label{chigl}
\xymatrix{
\chi=\bigoplus_{g\in \langle\Gamma\rangle^{fin},l=0,...,n_g} (\chi)_{g,l}\ar@{}[r]|:&\bigoplus_{g\in \langle\Gamma\rangle^{fin},l=0,...,n_g}K^*_{top}(\Gamma_g)_l\otimes \mathbb{C}\ar[r]& \bigoplus_{g\in \langle\Gamma\rangle^{fin},l=0,...,n_g}K^*_{top}(\Gamma_g)_l\otimes \mathbb{C}.
}
\end{equation}
}
compatible with the decomposition in factors (\ref{decompositionchiMatthey}) and computed explicitly by
\begin{equation}\label{chiglexplicit}
(\chi)_{g,l}([M,x]\otimes \lambda)=[M,x]\otimes \omega_g^l\cdot \lambda
\end{equation}
for $[M,x]\in K^*_{top}(\Gamma_g)_l\subset K^*_{top}(\Gamma_g)$, where the factor $K^*_{top}(\Gamma_g)_l$ corresponds to the factor $K^{\Gamma_g}_*(\underline{E\Gamma_g})_l$ via the isomorphism $\lambda$ through Matthey's decomposition (\ref{decompostionMattheylemma8.1}).

We are ready to prove the main theorem of this section.

\begin{proof}[\bf of Theorem \ref{thmmuF}]
We need to justify that
\begin{equation}\label{strategyML}
\xymatrix{
\bigoplus_{g\in \langle\Gamma\rangle^{fin}} K_{topf}^*(\Gamma_g)\otimes \mathbb{C} \ar[rr]^-{ch_{top}\circ \phi_*^\Gamma}&&H_{top}^*(\Gamma)\ar[r]^-{\mu_{F\Gamma}}& H_*(\Gamma,F\Gamma)
}
\end{equation}
is well-defined, which will imply, since $ch_{top}\circ \phi_*^\Gamma$ is a well-defined isomorphism, that $\mu_{F\Gamma}$ is well-defined.

By the previous discussion, the isomorphism $\phi_*^\Gamma$ appearing in (\ref{strategyML}) fits in the following commutative diagram
{\tiny
\begin{equation}
\xymatrix{
&&&\\
\bigoplus_{g\in \langle\Gamma\rangle^{fin}} K_*(B\Gamma_g)\otimes \mathbb{C} \ar@/^3pc/[rrr]^-{\phi_*^\Gamma}_-\cong\ar[r]^-\varphi &\bigoplus_{g\in \langle\Gamma\rangle^{fin}} K^{\Gamma_g}_*(\underline{E\Gamma_g})\otimes \mathbb{C}\ar[r]^-\chi_-\cong & \bigoplus_{g\in \langle\Gamma\rangle^{fin}} K^{\Gamma_g}_*(\underline{E\Gamma_g})\otimes \mathbb{C}
\ar[r]^-j & K_*^\Gamma(\underline{E\Gamma})\otimes \mathbb{C}\\
\bigoplus_{g\in \langle\Gamma\rangle^{fin}} K^*_{topf}(\Gamma_g)\otimes \mathbb{C} \ar@/_3pc/[rrr]_-{\phi_*^\Gamma}^-\cong \ar[u]^-\lambda_-\cong\ar[r]^-\varphi &\bigoplus_{g\in \langle\Gamma\rangle^{fin}} K^*_{top}(\Gamma_g)\otimes \mathbb{C}\ar[u]^-\lambda_-\cong\ar[r]^-\chi_-\cong & \bigoplus_{g\in \langle\Gamma\rangle^{fin}} K^*_{top}(\Gamma_g)\otimes \mathbb{C}\ar[u]^-\lambda_-\cong
\ar[r]^-j & K^*_{top}(\Gamma)\otimes \mathbb{C}\ar[u]^-\lambda_-\cong
&&&\\
}
\end{equation}
}
where the morphisms $\varphi$ and $\chi$ are the same as in (\ref{eq:psi.chi.j}) and $j$ is the morphism induced by the group inclusions $\Gamma_g\subset \Gamma$.
We need then to show that $\mu_{F\Gamma}\circ ch_{top}\circ (j\circ \chi\circ \varphi)$ is well-defined. We will achieve this goal by decomposing the problem in three steps.

{\bf Step I: $\left( \bigoplus_{g\in \langle\Gamma\rangle^{fin}} \mu_{F\Gamma_g}\right) \circ \left( \bigoplus_{g\in \langle\Gamma\rangle^{fin}} ch_{\Gamma_g}\right)\circ \varphi$ is well-defined.} To show this, let us fix an element $g\in \Gamma$ of finite order, let $M$ be a $\Gamma_g$-free proper cocompact $spin^c$ manifold, and let $x\in K_{\Gamma_g}^*(M)$. We want to prove that
\begin{equation}
(M,x)\mapsto \pi_{M}!(ch_g(x)\wedge Td_g)=\mu_{F\Gamma_g}\circ ch_{\Gamma_g}\circ \varphi_g
\end{equation}
gives rise to a well-defined morphism
\begin{equation}\label{auxmorphism}
K_{topf}^*(\Gamma_g)\longrightarrow H_*(\Gamma_g),
\end{equation}
where $\varphi_g:K^*_{topf}(\Gamma_g)\otimes \mathbb{C} \to K^*_{top}(\Gamma_g)\otimes \mathbb{C}$ is the $g$-component of $\varphi$, {\it i.e.}, $\varphi=\bigoplus_{g\in \langle\Gamma\rangle^{fin}} \varphi_g$.
Hence, let us consider a $\Gamma_g$-equivariant map $f:M\to M'$ between two $\Gamma_g$-free proper cocompact $spin^c$ manifolds. We have the following diagram
\begin{equation}
\label{eq:commdiag}
{\small
\xymatrix{
K^*(M\rtimes \Gamma_g)\ar[dd]_-{f_!}\ar[r]^-{ch_g\wedge Td_g}&
H_c^*(M\rtimes \Gamma_g)\ar[dd]^-{f_!}\ar[rd]^-{\pi_{M}!}&\\
&&H_*(\Gamma_g)\\
K^*(M'\rtimes \Gamma_g)\ar[r]_-{ch_g\wedge Td_g}& H_c^*(M'\rtimes \Gamma_g)
\ar[ru]_-{\pi_{M'}!}&
}}
\end{equation}
whose commutativity precisely implies that (\ref{auxmorphism}) is well-defined. Now, the first square in the diagram (\ref{eq:commdiag}) commutes by the Riemann-Roch Theorem~\ref{RRthm} and the triangle on the right commutes by Lemmas~\ref{auxlemma1} and~\ref{auxlemma2}.

{\bf Step 2: $\left( \bigoplus_{g\in \langle\Gamma\rangle^{fin}} \mu_{F\Gamma_g}\right) \circ \left( \bigoplus_{g\in \langle\Gamma\rangle^{fin}} ch_{\Gamma_g}\right)\circ \chi \circ \varphi$ is well defined.} To see this we shall use the decomposition of $\chi$ as in (\ref{decompositionchiMattheyTOP}) and its computation in (\ref{chigl}). By fixing an element $\gamma \in \Gamma$ of finite order and $l\in \{0,1,...,n_\gamma\}$ we need to show that
\begin{equation}\label{step2morphism}
\mu_{F\Gamma_\gamma,l} \circ ch^{\Gamma_\gamma}_{\gamma,l}\circ \chi_{\gamma,l}\circ \varphi_{\gamma,l}
\end{equation}
is a well-defined morphism
\begin{equation}
\label{eq:chernFGamma}
\bigoplus_{g\in \langle\Gamma\rangle^{fin}} K^*_{topf}(\Gamma_g)\otimes \mathbb{C}\longrightarrow H_*(\Gamma_\gamma,F\Gamma_\gamma),
\end{equation}
where
$$\varphi_{\gamma,l}:\bigoplus_{g\in \langle\Gamma\rangle^{fin}} K^*_{topf}(\Gamma_g)\otimes \mathbb{C}\to K^*_{top}(\Gamma_\gamma)_l\otimes \mathbb{C}$$
is given by $\varphi_{\gamma,l}(x):=(\varphi(x))_{\gamma,l}$ with respect to the decomposition (\ref{decompositionchiMattheyTOP}), and
$$ch^{\Gamma_\gamma}_{\gamma,l}:K^*_{top}(\Gamma_\gamma)_l\otimes \mathbb{C} \longrightarrow H^*_{top}(\Gamma_\gamma)_l$$
is just the $l$-component in
$$ch^{\Gamma_\gamma}=\bigoplus_{l=0,...,n_\gamma}ch^{\Gamma_\gamma}_{\gamma,l}$$
of the isomorphism $ch^{\Gamma_\gamma}:K_{top}(\Gamma_\gamma)\otimes \mathbb{C}\longrightarrow H_{top}^*(\Gamma_\gamma)$ with respect to the isomorphism (\ref{chitopiso}) and its decomposition (\ref{decompositionchiMattheyTOP}), and where
$$\mu_{F\Gamma_\gamma,l}:H^*_{top}(\Gamma_\gamma)_l\to H_{top}(\Gamma_\gamma,F\Gamma_\gamma)$$
denotes the restriction of $\mu_{F\Gamma_\gamma}$ to the $l$-component. Let
$$\chi_{\gamma,l}:H_*(\Gamma_\gamma,F\Gamma_\gamma)\to H_*(\Gamma_\gamma,F\Gamma_\gamma)$$
be the morphism given by the multiplication by $\omega_g^l$. By step 1, the morphism given by the composition as in the following diagram

\begin{equation}\label{step2aux}
\xymatrix{
\bigoplus_{g\in \langle\Gamma\rangle^{fin}} K^*_{topf}(\Gamma_g)\otimes \mathbb{C}\ar[r]^-{\varphi_{\gamma,l}} &
K^*_{top}(\Gamma_\gamma)_l \otimes \mathbb{C}\ar[d]^-{ch^{\Gamma_\gamma}_{\gamma,l}}_-\cong &\\
&H^*_{top}(\Gamma_\gamma)_l\ar[d]^-{\mu_{F\gamma,l}}& \\
&H_{top}(\Gamma_\gamma,F\Gamma_\gamma)\ar[r]^-{\chi_{\gamma,l}}_-\cong&H_{top}(\Gamma_\gamma,F\Gamma_\gamma)\\
}
\end{equation}
is well-defined.

By considering the morphism $\chi_{\gamma,l}$ in (\ref{chiglexplicit}) appearing in the decomposition (\ref{chigl}) we obtain that the following diagram
{\small
\begin{equation}
\xymatrix{
\bigoplus_{g\in \langle\Gamma\rangle^{fin}} K^*_{topf}(\Gamma_g)\otimes \mathbb{C}\ar[r]^-{\varphi_{\gamma,l}}&
K^*_{top}(\Gamma_\gamma)_l\otimes \mathbb{C}\ar[d]_-{ch^{\Gamma_\gamma}_{\gamma,l}}^-\cong\ar[r]^-{\chi_{\gamma,l}}&K^*_{top}(\Gamma_\gamma)_l\otimes \mathbb{C}\ar[d]^-{ch^{\Gamma_\gamma}_{\gamma,l}}_-\cong\\
&H^*_{top}(\Gamma_\gamma)_l\ar[d]_-{\mu_{F\gamma,l}}\ar[r]_-{\chi_{\gamma,l}}^-\cong&H^*_{top}(\Gamma_\gamma)_l\ar[d]^-{\mu_{F\gamma,l}}\\
&H_{*}(\Gamma_\gamma,F\Gamma_\gamma)\ar[r]_-{\chi_{\gamma,l}}^-\cong&H_{*}(\Gamma_\gamma,F\Gamma_\gamma),
}
\end{equation}}
where the (co)homological $\chi_{\gamma,l}$ are given by multiplication by $\omega_g^l$, is commutative. Then, from (\ref{step2aux}) above, we have that (\ref{step2morphism}) is well-defined.

{\bf Step 3: (\ref{strategyML}) is well-defined.}

By naturality of the Chern character morphisms, we have the following commutative diagram
\begin{equation}\label{step3diagramj}
\xymatrix{
\bigoplus_{g\in \langle\Gamma\rangle^{fin}}K^*_{top}(\Gamma_g)\otimes \mathbb{C}\ar[r]^-j\ar[dd]_-{\bigoplus_{g\in \langle\Gamma\rangle^{fin}} ch^{\Gamma_g}}^-\cong& K^*_{top}(\Gamma)\ar[dd]^-{ch^\Gamma}_-\cong\otimes \mathbb{C} \\
&\\
\bigoplus_{g\in \langle\Gamma\rangle^{fin}} H^*_{top}(\Gamma_g)\ar[r]_-j&H^*_{top}(\Gamma).
}
\end{equation}
For showing that $\mu_{F\Gamma}\circ ch^\Gamma\circ (j\circ \chi\circ \varphi)$ in (\ref{strategyML}) is well-defined, it is then enough to show that
\begin{equation}\label{strategyML2}
\xymatrix{
\bigoplus_{g\in \langle\Gamma\rangle^{fin}} K_{topf}^*(\Gamma_g)\otimes \mathbb{C} \ar[rrr]^-{ \left( \bigoplus_{g\in \langle\Gamma\rangle^{fin}} ch_{\Gamma_g}\right)\circ (\chi \circ \varphi)}&&&
\bigoplus_{g\in \langle\Gamma\rangle^{fin}} H_{top}^*(\Gamma_g)\ar[dd]^-{\left( \bigoplus_{g\in \langle\Gamma\rangle^{fin}} \mu_{F\Gamma_g}\right)}\\
&&&\\
&&& \bigoplus_{g\in \langle\Gamma\rangle^{fin}}H_*(\Gamma_g,F\Gamma_g)
}
\end{equation}
is well-defined, which was proved in Step 2, because then by considering the canonical morphism
\begin{equation}
\xymatrix{
\bigoplus_{g\in \langle\Gamma\rangle^{fin}}H_*(\Gamma_g,F\Gamma_g) \ar[r]^-j& H_*(\Gamma,F\Gamma)
}
\end{equation}
induced by the inclusions $\Gamma_g\subset \Gamma$ we see that the morphism given by the composition  of (\ref{strategyML2}) followed by the morphism $j$ above is well-defined but this composition coincides, in cycles, precisely with $\mu_{F\Gamma}\circ ch^\Gamma\circ (j\circ \chi\circ \varphi)$ by the commutativity of the diagram~(\ref{step3diagramj}).

This concludes the proof.
\end{proof}

\subsubsection{The pairing formula}

We are ready to give a precise formula for the pairing between the topological $K$-theory of $\Gamma$ and the cohomology $H^*(\Gamma,F\Gamma)$, which will be recalled in the corollary below. Let us first notice that given a $\Gamma$-proper cocompact $spin^c$ manifold $M$ we have that for every $\omega\in H^*_c(M\rtimes\Gamma)$ and for every $c\in H^*(\Gamma)$, the following equality
\begin{equation}
\langle \pi_M!(\omega),c\rangle=\langle \omega,\pi_M^*c\rangle
\end{equation}
holds by Lemma~\ref{auxlemma0}, where the left hand side is the pairing between group cycles and group cocycles, and the right hand side is the pairing between invariant forms and currents.

The following corollary summarizes all the main results in the present article.

\begin{corollary}\label{Maincorollarygrouphomoogy}
For any discrete group $\Gamma$, there is a well-defined morphism
\begin{equation}
\xymatrix{
K^*_{top}(\Gamma)\ar[rr]^-{ch_{\mu_{F\Gamma}}^{\Gamma}}&&H_*(\Gamma,F\Gamma)\\
}
\end{equation}
given by the composition of the Chern character for $\Gamma$
\begin{equation}
ch^{\Gamma}: K^*_{top}(\Gamma) \longrightarrow H^*_{top}(\Gamma)
\end{equation}
followed by the cohomological assembly
\begin{equation}
\mu_{F\Gamma}: H^*_{top}(\Gamma)\longrightarrow H_*(\Gamma,F\Gamma).
\end{equation}
The morphism induces a  well-defined pairing
\begin{equation}
K^*_{top}(\Gamma)\times H^*(\Gamma,F\Gamma)\to \mathbb{C}
\end{equation}
given by
\begin{equation}
\langle[M,x],\tau\rangle:=\langle\mu_{F\Gamma}(ch_{M}^{\Gamma}(x)\wedge Td_M^\Gamma),\tau\rangle=
\langle \pi_M!(ch_{M}^{\Gamma}(x)\wedge Td_M^\Gamma),\tau\rangle,
\end{equation}
and computed in every $g$-component by
\begin{equation}\label{conjecturedformula}
\langle [M,x],\tau_g\rangle = \langle ch_M^g(x)\wedge Td_g^M,\pi_g^*(\tau_g)\rangle,
\end{equation}
where the pairing on the right hand side corresponds the pairing between $\Gamma_g$-invariant forms and currents on $M_g$.
\end{corollary}

\subsection{The Chern-Baum-Connes assembly map in periodic cyclic homology and the Chern-Connes pairing formula for discrete groups}

As mentioned above the main result of this paper is already given by Corollary~\ref{Maincorollarygrouphomoogy}, which gives a well-defined pairing of the left hand side of the Baum-Connes assembly map and of the delocalized homology group of a discrete group. Moreover, it gives a geometric formula for all possible cycles. Now, it is however interesting, as we have briefly discussed above, that the above pairing can also be stated as a pairing in cyclic periodic cohomology. In fact, for a countable discrete group $\Gamma$, Burghelea showed in Theorem $I_p'$ in \cite{Bur} that there is an isomorphism
\begin{equation}
HP_*(\mathbb{C}(\Gamma))\cong H_*(\Gamma,F\Gamma)\bigoplus T_*(\Gamma),
\end{equation}
where $T_*(\Gamma)$ is a group depending only on the conjugacy classes of infinite order elements of $\Gamma$ and will not be involved in this paper. In particular, there is a morphism
\begin{equation}
\xymatrix{
H_*(\Gamma,F\Gamma)\ar[r]^-B & HP_*(\mathbb{C}\Gamma)
}
\end{equation}
which is an isomorphism onto its image as a direct factor and we can hence consider the assembly map
\begin{equation}
\mu_{\Gamma}: H^*_{top}(\Gamma)\longrightarrow HP_*(\mathbb{C}\Gamma)
\end{equation}
given by the composition of the cohomological assembly map $\mu_{F\Gamma}$ of Theorem~\ref{thmmuF} followed by Burghelea's morphism $B$.

Recall that the periodic cyclic homology groups of $\mathbb{C}\Gamma$ have also been computed by Burghelea. He showed in \cite{Bur} that there are isomorphisms
\begin{equation}\label{groupalgebracohomologysection}
HP^*(\mathbb{C}\Gamma)\cong \left( \prod_{\langle \Gamma\rangle^{fin}}\bigoplus_{k=*,\,mod\, 2}H^k(\Gamma_g;\mathbb{C})\right)  \bigoplus T^*(\Gamma)
\end{equation}
 where again the group $T^*(\Gamma)$ depends only on the conjugacy classes of infinite order elements of $\Gamma$ and will not be needed in this paper. In any case,  Theorem~\ref{Maincorollarygrouphomoogy} can be restated in terms of periodic cyclic theory as follows.

\begin{theorem}[The Chern-Baum-Connes assembly map in periodic cyclic homology]\label{ChernassemblythmHP}
For any discrete group $\Gamma$, there is well defined morphism
\begin{equation}
\xymatrix{
K^*_{top}(\Gamma)\ar[rr]^-{ch_\mu^{\Gamma}}&&HP_*(\mathbb{C}\Gamma)\\
}
\end{equation}
given by the composition of the Chern character
\begin{equation}
ch^{\Gamma}: K^*_{top}(\Gamma) \longrightarrow H^*_{top}(\Gamma)
\end{equation}
followed by the cohomological assembly
\begin{equation}
\mu_{\Gamma}: H^*_{top}(\Gamma)\longrightarrow HP_*(\mathbb{C}\Gamma).
\end{equation}
The morphism induces a well-defined pairing
\begin{equation}
K^*_{top}(\Gamma)\times HP^*(\mathbb{C}\Gamma)\to \mathbb{C}
\end{equation}
computed in every $g$-component (with respect to Burghelea's decomposition above) by
\begin{equation}\label{pairingformulaintro}
\langle [M,x],\tau_g\rangle = \langle ch_M^g(x)\wedge Td_g^M,\pi_g^*(\tau_g)\rangle,
\end{equation}
where the pairing on the right hand side corresponds the pairing between $\Gamma_g$-invariant forms and currents on $M_g$.
We name the morphism $ch_\mu^{\Gamma}$ as the Chern-Baum-Connes assembly map of the group $\Gamma$.
\end{theorem}

There are several reasons for which we decided to state the final result Theorem~\ref{ChernassemblythmHP} in terms of a pairing with cyclic periodic cohomology instead of just finishing the paper with the delocalized group homology version:
\begin{enumerate}
\item An important part of the techniques developed in this paper apply to more general groups or groupoids. Computations such as the one by Burghelea for discrete groups are not so common, but several examples of geometric cyclic cocycles are known in many different situations. One could expect to extend some index theoretic formulas to some other settings by using the techniques of the present paper.
\item For geometric/topological applications, it would be interesting to explore the injectivity of the cohomological assembly maps $\mu_{F\Gamma}$ and $\mu_\Gamma$. In principle, there are no analytic obstructions for this kind of problems. In fact, there are at least two models of similar cohomological assembly maps of this kind, both using periodic cyclic theory, in which the injectivity was proved under general assumptions on the group; See \cite{CorTar1} and \cite{Yualgebraic}. A comparison of their work with our cohomological assembly maps would be very interesting.
\item  Given a periodic cyclic class $\tau\in HP^*(\mathbb{C}\Gamma)$, by pairing with $K$-theory, it gives rise to a morphism
$\varphi_\tau: K_*(\mathbb{C}\Gamma)\to \mathbb{C}$; 	
Sometimes it makes sense to replace the domain by the $K$-theory groups of larger algebras (for example, the algebra $\mathcal{R}\Gamma$ considered by Connes and Moscovici in \cite{CMnov}). If the morphism $\varphi_\tau$ extends to $K_*(C_r^*(\Gamma))$ (or at least to appropriate smooth subalgebras) then one can consider the pairing
\begin{equation}
\langle\mu_{BC}(x),\tau\rangle
\end{equation}
with $x\in K_{top}^*(\Gamma)$ and $\mu_{BC}$ the $K$-theoretical Baum-Connes assembly map. Note that extending the morphisms $\varphi_\tau$ is a very hard problem. For example, for classes localized at identity, this would imply the Novikov conjecture on the homotopy invariance of higher signatures. On the other hand, for $x\in K_{top}^*(\Gamma)$ and any $\tau$ as above our pairing, $\langle ch_{\mu}^\Gamma(x),\tau\rangle$, is defined (and computed explicitly by Theorem~\ref{ChernassemblythmHP}) and in all known cases we have
\begin{equation}
\label{eq:pairing.BC}
\langle ch_{\mu}^\Gamma(x),\tau\rangle=\langle\mu_{BC}(x),\tau\rangle,
\end{equation}
whenever the right hand side makes sense and whenever it has been computed. This rises a very interesting question, in general, if $\varphi_\tau$ extends, do we have the equality (\ref{eq:pairing.BC})? 
\item In recent years, very interesting articles have appeared on the study of higher secondary invariants, surgery sequences and cyclic cocycle pairings and the very interesting and fruitful relations between these three topics were developed; For example, just to mention some of them: \cite{Ben}, \cite{BenRoy1}, \cite{CWXY}, \cite{PSZ} and \cite{XieYualgebraicity}. All these works are certainly closely related to our present paper and it would be highly interesting to explore some relations with these results.
\end{enumerate}



\appendix

\section{Semidirect product groupoids and algebras used in this article}\label{appendixgrpdalgebras}

In this section we will describe explicitly the main groupoids we use in the present article as well as the associated algebras.

Let $\Gamma$ be a discrete group and $M$ be a $\Gamma$-manifold, we can consider the following groupoids and algebras:

\begin{itemize}
\item {\bf The semi-direct groupoid $M\rtimes \Gamma$:} It is the groupoid
\begin{equation}
M\rtimes \Gamma\rightrightarrows M
\end{equation}
with structure maps
\begin{enumerate}
\item Source and target maps defined by
$$s(m,g)=m\cdot g,\,\,\, t(m,g)=m,$$
\item groupoid product
$$(m,g)\cdot (mg,h):=(m,gh),$$
\item unit map
$$u(m)=(m,e),$$
where $e\in \Gamma$ is the neutral element for the group structure, and
\item inverse map
$$(m,g)^{-1}:=(mg,g^{-1}).$$
\end{enumerate}
For this groupoid we will usually consider the usual groupoid convolution algebra of complex valued compactly supported $C^\infty$-functions $C_c^\infty(M\rtimes \Gamma)$.
\end{itemize}

\begin{itemize}
\item {\bf The semi-direct groupoid $(M\times M)\rtimes \Gamma$:} It is the groupoid associated to the diagonal action on $M\times M,$ given by
\begin{equation}
(M\times M)\rtimes \Gamma\rightrightarrows M
\end{equation}
with structure maps
\begin{enumerate}
\item Source and target maps defined by
$$s(m,n,g)=n\cdot g^{-1},\,\,\, t(m,,n,g)=m,$$
\item groupoid product
$$(m,n,g)\cdot (ng^{-1},l,h):=(m,lg,hg)$$
\item unit map
$$u(m)=(m,m,e),$$
where $e\in \Gamma$ is the neutral element for the group structure, and
\item inverse map
$$(m,n,g)^{-1}:=(ng^{-1},mg^{-1},g^{-1}).$$
\end{enumerate}
For this groupoid we will usually consider the usual groupoid convolution algebra of complex valued compactly supported $C^\infty$-functions $C_c^\infty((M\times M)\rtimes \Gamma)$.
\end{itemize}

\begin{itemize}
\item {\bf The semi-direct groupoid $TM \rtimes \Gamma$:} It is the groupoid associated to the infinitesimal action on $TM$, it is given by
\begin{equation}
TM\rtimes \Gamma\rightrightarrows M
\end{equation}
with structure maps
\begin{enumerate}
\item Source and target maps defined by
$$s(m,V,g)=m\cdot g^{-1},\,\,\, t(m,V)=m,$$
\item groupoid product
$$(m,V,g)\cdot (mg^{-1},W,h):=(m,V+d_{mg^{-1}}R_{g}(W),hg),$$
where $R_g:M\to M $ stands for the right action by $g$,
\item unit map
$$u(m)=(m,0_m,e),$$
where $e\in \Gamma$ is the neutral element for the group structure and $0_m\in T_mM$ is the origin vector, and
\item inverse map
$$(m,V)^{-1}:=(mg^{-1},-d_mR_{g^{-1}}(V),g^{-1}).$$
\end{enumerate}
For this groupoid we will usually consider the Schwartz groupoid convolution algebra of complex valued $C^\infty$- fiber rapidly decreasing functions with compact support in direction of the base and in the $\Gamma$-direction, $\mathscr{S}(TM\rtimes \Gamma)$. To be more precise an element in this algebra is a $C^\infty$, complex valued function $f$ on $TM\times \Gamma$ such that $f(V,g)=0$ for all but a finite number of $g\in \Gamma$ and such that the function
$f_g:TM\to \mathbb{C}$ defined by $f_g(V):=f(V,g)$ is in the Schwartz space $\mathscr{S}(TM)$ defined in \cite{Ca2} Definition 4.6. We can hence formally denote an element of  $\mathscr{S}(TM\rtimes \Gamma)$ as a finite sum
\begin{equation}
\sum_{g\in \Gamma}f_g\cdot g.
\end{equation}
The algebra product is given as the convolution product of $C_c^\infty(TM\rtimes \Gamma)$ where the groupoid under consideration is the groupoid $TM\rtimes \Gamma\rightrightarrows M$ as above, the product is well-defined since we are taking fiberwise Schwartz functions, see ref.cit for more details.
\begin{remark}
For a $\Gamma$-vector bundle $E\to M$ we can consider the analog algebra $\mathscr{S}(E\rtimes \Gamma)$.
\end{remark}
\end{itemize}

\begin{itemize}
\item {\bf The semi-direct groupoid $G_M^{tan} \rtimes \Gamma$:} First, let us remark that the diagonal action of $\Gamma$ on $M\times M$ sends the diagonal to the diagonal and hence the functorilaity of the deformation to the normal cone construction yields an action of $\Gamma$ on the tangent groupoid $G_M^{tan}$ of $M$ that unifies the action on $M\times M$ and the infinitesimal action on $TM$. The associated semi-direct product groupoid is denoted by
\begin{equation}
G_M^{tan}\rtimes \Gamma\rightrightarrows M\times [0,1]
\end{equation}
with groupoid structure given as a family of groupoids overs $[0,1]$ with the groupoid structure of $(M\times M)\rtimes \Gamma$ for $t\neq 0$ and with groupoid structure of $TM\rtimes \Gamma $ for $t=0$.

Again, this groupoid is a Lie groupoid and we could consider its convolution algebra but we will need a slightly larger algebra, indeed for this groupoid we consider the Schwartz type algebra $\mathscr{S}_c(G_M^{tan}\rtimes \Gamma)$ as an immediate generalisation of the Schwartz type algebra for the tangent groupoid defined in \cite{Ca2}. Formally an element in $\mathscr{S}_c(G_M^{tan}\rtimes \Gamma)$ is a $C^\infty$-function $f:G_M^{tan}\times \Gamma\to \mathbb{C}$ such that almost all but finite $f_g=f(\cdot,g):G^{tan}_M\to \mathbb{C}$ are zero and all of them belong to the vector space $\mathscr{S}_c(G_M^{tan})$ defined in \cite{Ca2} Section 4.2. Again, elements in $\mathscr{S}_c(G_M^{tan}\rtimes \Gamma)$ might be denoted as usual as sums
\begin{equation}
\sum_{g\in \Gamma}f_g\cdot g,
\end{equation}
with $f_g\in \mathscr{S}_c(G_M^{tan})$ (defined in Theorem 4.10 ref.cit.). The convolution product in this algebra is induced as classically by the groupoid product in $G_M^{tan}\rtimes \Gamma$.

The fundamental property of this algebra is to be an intermediate algebra
\begin{equation}
C_c^\infty(G_M^{tan}\rtimes \Gamma)\subset \mathscr{S}_c(G_M^{tan}\rtimes \Gamma) \subset C_r^*(G_M^{tan}\rtimes \Gamma)
\end{equation}
 such that the canonical evaluation morphisms yield a family of algebras
\begin{equation}
\mathscr{S}_c(G_M^{tan}\rtimes \Gamma)|_{t=0}=\mathscr{S}(TM\rtimes \Gamma) \,\, and \,\, \mathscr{S}_c(G_M^{tan}\times \Gamma)|_{t\neq 0}=C_c^\infty((M\times M)\rtimes \Gamma).
\end{equation}
In fact, there is a short exact sequence (further details in the next appendix)
\begin{equation}
\xymatrix{
0\ar[r]&J\ar[r]&\mathscr{S}_c(G_M^{tan}\times \Gamma)\ar[r]^-{e_0}&\mathscr{S}(TM\rtimes \Gamma)\ar[r]&0.
}
\end{equation}
As we will see in the next appendix the above kernel algebra $J$ has trivial periodic cyclic (co)homology groups and hence the evaluation at $t=0$ will induce isomorphisms in these (co)homology groups.
\end{itemize}

\begin{itemize}
\item {\bf The semi-direct deformation groupoid $D_f\rtimes \Gamma$:} The previous example is a particular case of the present example. To introduce it, let $f:M\longrightarrow N$ be a $\Gamma$-equivariant $C^\infty$-map between two $\Gamma$-manifolds $M$ and $N$, we consider the $\Gamma$-vector bundle
$T_f:=TM\oplus f^*TN$ over $M$ and the deformation groupoid
\begin{equation}
D_f:TM\oplus f^*TN\bigsqcup(M\times M \times N) \times (0,1]\rightrightarrows f^*TN\bigsqcup (M\times N) \times (0,1]
\end{equation}
given as the normal groupoid associated to the $\Gamma$-map
$$M\stackrel{\Delta\times f}{\longrightarrow}M\times M \times N.$$
Here the $\Gamma$-action on $M\times M \times N$ preserves the image of $M$ under the previous map $\Delta\times f$ and hence, by functoriality of the deformation to the normal cone construction, we have an action of $\Gamma$ on $D_f$ inducing a semi-direct product Lie groupoid
\begin{equation}
D_f\rtimes \Gamma \rightrightarrows f^*TN\bigsqcup (M\times N) \times (0,1]
\end{equation}
whose groupoid structure is given fiberwisely over $[0,1]$ as the groupoid structure of
$$((M\times M) \times N)\rtimes \Gamma\rightrightarrows M\times N$$
for $t\neq 0$, and as the groupoid structure
$$(TM\oplus f^*TN)\rtimes \Gamma \rightrightarrows f^*TN$$
at $t=0$.
We consider the Schwartz type algebra $\mathscr{S}_c(D_f\rtimes \Gamma)$ as defined in \cite{Ca2} for general deformation groupoids and whose fundamental property is to be an intermediate algebra
\begin{equation}
C_c^\infty(D_f\rtimes \Gamma)\subset \mathscr{S}_c(D_f\rtimes \Gamma) \subset C_r^*(D_f\rtimes \Gamma)
\end{equation}
such that the canonical evaluation morphisms yield a family of algebras
\begin{equation}
\mathscr{S}_c(D_f\rtimes \Gamma)|_{t=0}=\mathscr{S}((TM\oplus f^*TN)\rtimes \Gamma) \,\, and \,\, \mathscr{S}_c(D_f\rtimes \Gamma)|_{t\neq 0}=C_c^\infty(((M\times M)\times N)\rtimes \Gamma).
\end{equation}
In fact, there is a short exact sequence (further details in next appendix)
\begin{equation}
\xymatrix{
0\ar[r]&J\ar[r]&\mathscr{S}_c(D_f\rtimes \Gamma)\ar[r]^-{e_0}&\mathscr{S}((TM\oplus f^*TN)\rtimes \Gamma)\ar[r]&0.
}
\end{equation}
As we will see in the next appendix, the above kernel algebra $J$ has trivial Cyclic periodic (co)homology groups and hence the evaluation at $t=0$ will induce isomorphisms in these (co)homology groups.
\end{itemize}

\section{Some Schwartz type algebras for $\Gamma$-vector bundles}\label{appendixB}

We already introduced in the last appendix some algebras that use Schwartz functions in certain way. In this appendix we want to explain the reason of using Schwartz algebras in the present article and some important facts related to these algebras.

Given a $\Gamma$-proper equivariant vector bundle $E\to M$ there are two kinds of Schwartz algebras considered in this article, as sets they are the same, the vector space of complex valued $C^\infty$- fiber rapidly decreasing functions with compact support in direction of the base and in the $\Gamma$-direction, denoted by $\mathscr{S}(E\rtimes \Gamma)$. An element in this vector space is a complex valued function $f$ on $E\times \Gamma$ such that $f(V,g)=0$ for all but a finite number of $g\in \Gamma$ and such that the function
$f_g:E\to \mathbb{C}$ defined by $f_g(V):=f(V,g)$ is in the Schwartz space $\mathscr{S}(E)$ defined in \cite{Ca2} Definition 4.6. We can formally denote an element of  $\mathscr{S}(E\rtimes \Gamma)$ as a finite sum
\begin{equation}
\sum_{g\in \Gamma}f_g\cdot g.
\end{equation}
Now, there are two different algebra products in these spaces, depending on which groupoid one is considering, indeed if one considers the semi-direct product groupoid
$$E\rtimes \Gamma \rightrightarrows M$$
associated to the $\Gamma$-vector bundle structure then there is a convolution product $(\mathscr{S}(E\rtimes \Gamma),*)$ using this groupoid structure. On the other hand one might consider the action groupoid
$$E\rtimes \Gamma\rightrightarrows E$$
associated to the $\Gamma$-space $E$, in this case there is a punctual product algebra $(\mathscr{S}(E\rtimes \Gamma),\cdot)$ that uses indeed the unit  groupoid structure. These two algebras are not the same at all. In the context, we use both examples, for the semi-direct product groupoid
$$TM\rtimes \Gamma\rightrightarrows M,$$
and for the action groupoid
$$T^*M\rtimes \Gamma\rightrightarrows T^*M,$$
or for the bundles $T_f$ and $T_f^*$ respectively. As in the classical case when $\Gamma$ is the trivial group we have a Fourier type algebra isomorphism
$$(\mathscr{S}(TM\rtimes \Gamma),*)\cong (\mathscr{S}(T^*M\rtimes \Gamma),\cdot),$$
which indeed is a particular case of the following proposition.
\begin{proposition}
Given a $\Gamma$-proper equivariant vector bundle $E\to M$, we have the following two properties:
\begin{enumerate}
\item The fiberwise Fourier transform gives an algebra isomorphism
\begin{equation}
(\mathscr{S}(E\rtimes \Gamma),*)\cong (\mathscr{S}(E^*\rtimes \Gamma),\cdot).
\end{equation}
\item The convolution algebra $(\mathscr{S}(E\rtimes \Gamma),*)$ is stable under holomorphic calculus as a subalgebra of $(C_r^*(E\rtimes \Gamma),*)$.
\end{enumerate}
\end{proposition}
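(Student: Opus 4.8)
The statement to prove is the Proposition asserting, for a $\Gamma$-proper equivariant vector bundle $E\to M$: (1) the fiberwise Fourier transform is an algebra isomorphism $(\mathscr{S}(E\rtimes\Gamma),*)\cong(\mathscr{S}(E^*\rtimes\Gamma),\cdot)$; and (2) $(\mathscr{S}(E\rtimes\Gamma),*)$ is stable under holomorphic functional calculus in $C_r^*(E\rtimes\Gamma)$. My plan is to treat the two parts independently, reducing each to the already-known non-equivariant case by exploiting the ``finitely supported in the $\Gamma$-direction'' structure of the formal sums $\sum_{g\in\Gamma}f_g\cdot g$.

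For part (1), the key observation is that the fiberwise Fourier transform $\mathcal{F}$ acts component-wise: $\mathcal{F}(\sum_g f_g\cdot g)=\sum_g \widehat{f_g}\cdot g$, where $\widehat{f_g}\in\mathscr{S}(E^*)$ is the ordinary fiberwise Fourier transform of $f_g\in\mathscr{S}(E)$. First I would check that $\mathcal{F}$ is a bijection of the underlying vector spaces, which is immediate from the classical fact that fiberwise Fourier transform maps $\mathscr{S}(E)$ isomorphically onto $\mathscr{S}(E^*)$ together with the fact that only finitely many components are nonzero. The substantive point is multiplicativity. In the convolution groupoid $E\rtimes\Gamma\rightrightarrows M$ the product on formal sums combines two things: the group convolution in $\Gamma$ (which produces the factor $g h$) and, in each fiber, the addition of vectors in $E$ twisted by the right action $R_g$ of $\Gamma$ on $E$. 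The classical identity ``Fourier of convolution is product of Fouriers'' handles the fiberwise convolution, while the $\Gamma$-twist by $R_g$ on $E$ corresponds under Fourier transform to the dual twist $R_g^*$ on $E^*$, which is precisely how the action groupoid $E^*\rtimes\Gamma\rightrightarrows E^*$ and its punctual product are built. So I would write out both products on a pair of elementary tensors $f_g\cdot g$ and $f'_h\cdot h$, apply $\mathcal{F}$, and verify the two sides agree; the only care needed is to track the $R_g$-twist correctly, and this is the main (though still routine) computation. Since everything is a finite $\Gamma$-sum of Schwartz functions, there are no convergence issues.

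For part (2), spectral invariance, I would invoke the corresponding non-equivariant result already in the literature — the Schwartz algebra $\mathscr{S}(E)$ of fiberwise Schwartz functions on a vector bundle is stable under holomorphic calculus in $C_r^*(E)$ (this is essentially the content cited from \cite{Ca2}, being the $\Gamma$-trivial case) — and then upgrade it to the semidirect product. The standard mechanism here is that $\mathscr{S}(E\rtimes\Gamma)$ can be described as an algebraic (finite-support) crossed product of $\mathscr{S}(E)$ by $\Gamma$, and that a Fréchet subalgebra closed under holomorphic calculus remains so after forming a finitely-supported crossed product, provided the smaller algebra is itself a Fréchet $m$-convex algebra dense in the $C^*$-completion and the $\Gamma$-action is by continuous automorphisms — properties that hold here because the $\Gamma$-action on $E$ is proper, so the action groupoid has good analytic behaviour. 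Concretely, I would show that if $a\in\mathscr{S}(E\rtimes\Gamma)$ is invertible in $C_r^*(E\rtimes\Gamma)$, then its inverse, a priori only in the $C^*$-algebra, in fact has all of its $\Gamma$-components in $\mathscr{S}(E)$ and is itself finitely supported in $\Gamma$; the finiteness of support for the inverse is the point that needs properness (it follows because, after conjugating by the Morita equivalence making the action proper, one is inverting in a genuinely ``local'' algebra). Alternatively, and perhaps more cleanly, I would note that $\mathscr{S}(E\rtimes\Gamma)$ is closed under the smooth functional calculus and is a dense Fréchet subalgebra, and apply a general lemma (as in the differentiable-algebra literature) that such algebras, when built as convolution algebras of these deformation/action groupoids, are automatically spectrally invariant.

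The main obstacle I anticipate is not in part (1) — that is a bookkeeping exercise with the $R_g$-twist — but in making part (2) fully rigorous: one must be careful that ``finitely supported in the $\Gamma$-direction'' is preserved under inversion in the $C^*$-algebra. This is where properness of the action is genuinely used, and the cleanest route is to reduce, via the Morita equivalence furnished by properness, to a situation where the relevant algebra is a unitalization of something like $C_c^\infty$ of a proper groupoid tensored with fiberwise-Schwartz functions, for which spectral invariance is known; propagating the conclusion back through the Morita equivalence then gives the claim. I would also remark that property (2) is exactly what licenses applying $K$-theory and periodic cyclic homology of $\mathscr{S}(E\rtimes\Gamma)$ as stand-ins for those of $C_r^*(E\rtimes\Gamma)$ throughout the paper, so it is worth stating the hypotheses under which it holds with some care.
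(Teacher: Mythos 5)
The paper does not actually prove this proposition: it only remarks that ``the proof of the above two statements is quite classical'' and points the reader to the proof of Proposition 4.5 in \cite{Ca4}. So your sketch is already more explicit than the source, and its overall architecture is the right one. Part (1) is indeed the routine bookkeeping you describe: the transform acts componentwise on the finite sums $\sum_g f_g\cdot g$, the fiberwise convolution (addition in the fibres of $E$, twisted by $dR_g$) goes to the pointwise product on $E^*$ twisted by the dual action, and the $\Gamma$-convolution is untouched; finiteness of the $\Gamma$-support removes all convergence issues.

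For part (2) your concrete plan --- show that the inverse of $1+a$ in the unitalized $C^*$-algebra again has finite $\Gamma$-support, compact base support and Schwartz fibre decay, with properness entering exactly at the finite-support step --- is the correct one, but two of the routes you float are weaker than you suggest. The appeal to ``a general lemma that such algebras are automatically spectrally invariant'' is not a proof, and the Morita-equivalence detour is delicate: holomorphic closedness does not pass through a Morita equivalence for free, one would have to implement it at the level of the smooth subalgebras (corners, full projections) and argue in both directions. The clean, self-contained route is to let part (1) do real work in part (2): transport the problem through the Fourier isomorphism to $(\mathscr{S}(E^*\rtimes\Gamma),\cdot)$, where the product is pointwise in the fibre direction. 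There the identities $b=-a-ab=-a-ba$ for the inverse $1+b$ of $1+a$ confine the source and target base-supports of $b$ to those of $a$, and properness of the action groupoid makes the set of arrows between two compact subsets of the base compact, hence with finite $\Gamma$-projection; the fibrewise Schwartz decay of the components of $b$ is then checked directly, as in the elementary fact that $\mathscr{S}(\mathbb{R}^n)$ is holomorphically closed in $C_0(\mathbb{R}^n)$ (the denominator $1+f$ is bounded below since $f$ vanishes at infinity, and derivatives of $(1+f)^{-1}$ are controlled). If you try to run the same support argument directly on the convolution side, you hit the difficulty you did not flag: a fibrewise convolution of a Schwartz function against a mere $C^*$-element need not be Schwartz, so the decay of the components of the inverse is genuinely harder to see before Fourier transforming. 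With that adjustment your outline closes up into a complete proof.
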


The proof of the above two statements is quite classical, for instance one can follow the same lines as the proof of Proposition 4.5 in \cite{Ca4}.

Now, one important consequence for us is that, in the above situation, we have the following isomorphisms in $K$-theory

\begin{equation}
K^*_\Gamma(E^*)\cong K_*(\mathscr{S}(E^*\rtimes \Gamma),\cdot)\cong K_*(\mathscr{S}(E\rtimes \Gamma),*)\cong K_*(C^*_r(E\rtimes \Gamma),*) ,
\end{equation}
where the first group above stands for the $\Gamma$-equivariant topological $K-$theory for the total space $E^*$.

When applied to the bundle $E=TM$ (or $T_f$), as in the text, the series of isomorphisms above is very important for us, since for a $\Gamma$-proper manifold $M$ the kind of elliptic operators we want to consider have principal symbol classes in the groups on the left but the deformation groupoid techniques give total symbol classes in groups as in the right hand side. On the other hand, using only $C^*$-algebras would be very limited for us, since we apply as well isomorphisms as above but in periodic cyclic (co)homology, in particular the ones induced from the Fourier algebra isomorphism
\begin{equation}
HP_*(\mathscr{S}(E^*\rtimes \Gamma),\cdot)\cong HP_*(\mathscr{S}(E\rtimes \Gamma),*)
\end{equation}
and
\begin{equation}
HP^*(\mathscr{S}(E^*\rtimes \Gamma),\cdot)\cong HP^*(\mathscr{S}(E\rtimes \Gamma),*).
\end{equation}
\section{Deformation morphisms in periodic cyclic (co)homology}\label{appendixdefindices}

Let $f:M\longrightarrow N$ be a $\Gamma$-equivariant $C^\infty$-map between two $\Gamma$-manifolds $M$ and $N$. We consider the $\Gamma$-vector bundle
$T_f:=TM\oplus f^*TN$ over $M$. We assume $M$ is $\Gamma$-proper and that $T_f$ admits a $\Gamma$-proper, $\Gamma$-spin$^c$ structure. We are going to use deformation groupoids to construct a morphism
\begin{equation}
I_f:HP_*(\mathscr{S}(T_f\rtimes \Gamma))\longrightarrow HP_*(C_c^{\infty}((M\times M \times N)\rtimes \Gamma)).
\end{equation}
Consider the deformation groupoid
\begin{equation}
D_f:=TM\oplus f^*TN\bigsqcup(M\times M \times N) \times (0,1]
\end{equation}
over
\begin{equation}
f^*TN\bigsqcup (M\times N) \times (0,1]
\end{equation}
given as the normal groupoid associated to the $\Gamma$-map
$$M\stackrel{\Delta\times f}{\longrightarrow}M\times M \times N.$$
As we already discussed above, the action of $\Gamma$ on $(M\times M)\times N$ (diagonal on $M\times M$) leaves invariant the image of $M$ by $\Delta \times f$ and then we have, by functoriality of the deformation to the normal cone construction, an induced semi-direct product groupoid
$$D_f\rtimes \Gamma\rightrightarrows f^*TN\bigsqcup (M\times N) \times (0,1].$$

Let us describe two invariant open subset covering $D_f$ and hence giving a decomposition of $D_f\rtimes \Gamma$. Under the hypothesis above on $T_f$ (that it is identified with the normal bundle associated to $\Delta \times f$) we can assume that we have a global $\Gamma$- tubular neighborhood isomorphism
$$
\xymatrix{
T_f\ar[r]^-{\theta}& \mathcal{V}\subset M\times M \times N,
}
$$
meaning that $\theta$ is a diffeomorphism from $T_f$ to an open neighborhood $\mathcal{V}$ of $M$ in $M\times M \times N$ sending the zero section to $M$ by the identity. The existence of such a global invariant tubular neighborhood can be obtained for example from a $\Gamma$-invariant metric on $T_f$ and this is possible to obtain since the action is proper.

We have the following $\Gamma$-invariant chart for $D_f$
\begin{equation}\label{psichart}
T_f\times [0,1]\stackrel{\psi}{\longrightarrow} \widetilde{\mathcal{V}}\subset D_f
\end{equation}
explicitly given by
$$\psi(V,0)=(V,0),$$
and
$$\psi(V,\epsilon)=(\theta(\epsilon\cdot V),\epsilon)$$
for $\epsilon\neq 0$. The semidirect product groupoid $D_f\rtimes \Gamma$ might be covered with two open subsets as
$$\widetilde{\mathcal{V}}\rtimes \Gamma \bigcup ((M\times M) \times N \times (0,1])\rtimes \Gamma.$$
By construction of the space $\mathscr{S}_c(D_f)$, we have that
$$\mathscr{S}_c(D_f\rtimes \Gamma)=\mathscr{S}(\widetilde{\mathcal{V}}\rtimes \Gamma)+C_c^\infty(((M\times M) \times N \times (0,1])\rtimes \Gamma).$$
Notice that the intersection $\widetilde{\mathcal{V}}\rtimes \Gamma \bigcap ((M\times M) \times N \times (0,1])\rtimes \Gamma$ identifies with $\mathcal{V}\times (0,1]$

Since periodic cyclic (co)homology has a six term Mayer-Vietoris exact sequence and since the periodic cyclic (co)homology groups of
$C_c^\infty(((M\times M) \times N \times (0,1])\rtimes \Gamma)$ and of $\mathscr{S}(\mathcal{V} \times(0,1])$ vanish by homotopy invariance of periodic cyclic (co)homology, we obtain that the morphisms
\begin{equation}
HP^*(\mathscr{S}_c(D_f\rtimes \Gamma))\stackrel{i_{\mathcal{V}}^*}{\longrightarrow} HP^*(\mathscr{S}(\widetilde{\mathcal{V}}\rtimes \Gamma))
\end{equation}
and
\begin{equation}
HP_*(\mathscr{S}(\widetilde{\mathcal{V}}\rtimes \Gamma))\stackrel{(i_{\mathcal{V}})_*}{\longrightarrow} HP_*(\mathscr{S}_c(D_f\rtimes \Gamma)),
\end{equation}
induced from the canonical inclusion algebra morphism
\begin{equation}
i_{\mathcal{V}}:\mathscr{S}(\widetilde{\mathcal{V}}\rtimes \Gamma)\longrightarrow \mathscr{S}_c(D_f\rtimes \Gamma),
\end{equation}
are isomorphisms. Note that the chart $\psi$ in (\ref{psichart}) induces an isomorphism
\begin{equation}
HP^*(\mathscr{S}(\widetilde{\mathcal{V}}\rtimes \Gamma)) \stackrel{\widetilde{\psi}}{\longrightarrow}HP^*(\mathscr{S}((T_f\times [0,1])\rtimes \Gamma)),
\end{equation}
and by homotopy invariance, the morphism
\begin{equation}
\alpha^*:HP^*(\mathscr{S}((T_f\times [0,1])\rtimes \Gamma))\longrightarrow HP^*(\mathscr{S}(T_f\rtimes \Gamma)),
\end{equation}
induced from the canonical morphism (constant in $\epsilon\in [0,1]$)
$$\alpha:\mathscr{S}(T_f\rtimes \Gamma)\to \mathscr{S}((T_f\times [0,1])\rtimes \Gamma),$$
is also an isomorphism. A direct and simple computation shows that the isomorphism
\begin{equation}
\xymatrix{
HP^*(\mathscr{S}_c(D_f\rtimes \Gamma))\ar[r]^-{\alpha^*\circ \widetilde{\psi}\circ i_{\mathcal{V}}^*}& HP^*(\mathscr{S}(T_f\rtimes \Gamma))
}
\end{equation}
has as left and right inverse the morphism (and hence isomorphism)
\begin{equation}
HP^*(\mathscr{S}(T_f\rtimes \Gamma))\stackrel{e_0^*}{\longrightarrow} HP^*(\mathscr{S}_c(D_f\rtimes \Gamma)).
\end{equation}

Therefore we can define the {\it deformation morphism}
\begin{equation}
I_f:HP_*(\mathscr{S}(T_f\rtimes \Gamma))\longrightarrow HP_*(C_c^{\infty}((M\times M \times N)\rtimes \Gamma))
\end{equation}
as the composition of the inverse of the morphism induced by the evaluation at zero
$$
\xymatrix{
HP_*(\mathscr{S}_c(D_f\rtimes \Gamma))\ar[r]^-{(e_0)_*}_-\cong & HP_*(\mathscr{S}(T_f\rtimes \Gamma)),
}
$$
followed by the morphism induced by the evaluation at $t=1$
$$HP_*(\mathscr{S}_c(D_f\rtimes \Gamma))\stackrel{(e_1)_*}{\longrightarrow} HP_*(C_c^{\infty}((M\times M \times N)\rtimes \Gamma)).$$

\begin{remark}
Notice that we have not only proved that $e_0^*$ above is an isomorphism but we have exhibit an explicit expression for the inverse by ``localizing" at an invariant neighborhood. This inverse will be explicitly used in the computations, it also gives a conceptual explanation to many analytically founded index formulas in the literature. Similar arguments work for periodic cyclic homology.
\end{remark}

\section{The Tu-Xu isomorphism}\label{appendixTuXu}

Given a discrete group $\Gamma$ acting properly by diffeomorphisms on a smooth manifold $M$, Tu and Xu constructed\footnote{In fact for more general proper etale groupoids and with the possible extra data of a twisting but we do not need these generalities here.}  (for $*=0,1$ mod $2$ in \cite{TuXuChern}
Section 4) an isomorphism
\begin{equation}\label{tau1appendix}
\xymatrix{
HP_*(C_c^\infty(M\rtimes \Gamma))\ar[r]^-{\tau}_-\cong& H^*_{c,per}(\Lambda(M\rtimes \Gamma))
}
\end{equation}
where $H^*_{c,per}(\Lambda(M\rtimes \Gamma))$ is a periodic version of the compactly supported de Rham cohomology groups of the so-called inertia groupoid that we now briefly recall: consider the manifold
\begin{equation}
S(M\rtimes \Gamma):=\{(x,\gamma)\in M\times \Gamma: x\cdot \gamma=x\}
\end{equation}
and the canonical action of $\Gamma$ on $S(M\rtimes \Gamma)$ by conjugation. The associated action Lie groupoid is denoted by
\begin{equation}
\Lambda(M\rtimes \Gamma)\rightrightarrows S(M\rtimes \Gamma)
\end{equation}
and is called the inertia groupoid associated to the action.

In this appendix we briefly recall the Tu-Xu isomorphism and how a small modification of it gives an isomorphism
\begin{equation}
\xymatrix{
HP_*(C_c^\infty(M\rtimes \Gamma))\ar[r]^-{TX}_-\cong& H^*_{\Gamma,deloc}(M)
}
\end{equation}
fitting in a commutative diagram of isomorphisms
\begin{equation}
\xymatrix{
HP_*(C_c^\infty(M\rtimes \Gamma))\ar[rrd]_-{TX}^-\cong \ar[rr]^-{\tau}_-\cong&& H^*_{c,per}(\Lambda(M\rtimes \Gamma))\ar[d]^-\cong\\
&&H^*_{\Gamma,deloc}(M)
}
\end{equation}
where the vertical isomorphism above is induced, as we will see below, from canonical closed Lie groupoids inclusions
$$M_g\rtimes \Gamma_g\hookrightarrow \Lambda(M\rtimes \Gamma).$$
First of all let us recall what the groups $H^*_{c,per}(\Lambda(M\rtimes \Gamma))$ stand for. We remark that we are not using the same notation as Tu and Xu. As we mentioned above, Tu and Xu isomorphism is more general, it is done in the context of twisted orbifolds, in our case the twisting is trivial and the orbifolds are particularly given as quotients $M/\Gamma$. In our setting, following Tu and Xu, the groups $H^*_{c,per}(\Lambda(M\rtimes \Gamma))$ are defined as the cohomology of the complex
\begin{equation}
\left(\Omega_c^*(\Lambda(M\rtimes \Gamma)((u)),\nabla \right),
\end{equation}
where $u$ is a formal variable of degree $-2$, $((u))$ stands for formal Laurent series in $u$ and $\nabla:\Omega_c^*(S(M\rtimes \Gamma))\to \Omega_c^{*+1}(S(M\rtimes \Gamma))$ is a $\Gamma$-invariant covariant differential associated to a canonical flat connection of the trivial line bundle $S(M\rtimes \Gamma)\times \mathbb{C}\to S(M\rtimes \Gamma)$. In fact in this case, since the action is proper, $\Omega_c^*(\Lambda(M\rtimes \Gamma)=\Omega_c^*(S(M\rtimes \Gamma))^\Gamma$ where the latter is the space of all $\Gamma$-invariant forms, see Corollary 3.2 in \cite{TuXuChern} for further details. In particular, in our non-twisted case, we have
\begin{equation}
H^*_{c,per}(\Lambda(M\rtimes \Gamma))\cong \bigoplus_{k=*,\,mod\, 2} H^k_c(\Lambda(M\rtimes \Gamma)),
\end{equation}
however we will use the terminology with the formal variable $u$ to describe such groups since it makes easier to follow Tu and Xu construction. The morphism $\tau$ of (\ref{tau1appendix}) above depends  \`a priori on the choice of a covariant differential $\nabla:\Omega_c^*(M\rtimes \Gamma)\to \Omega_c^{*+1}(M\rtimes \Gamma)$ and a $2$-curvature form $B\in \Omega(M)$ (considered as an element of $\Omega^*(M\rtimes \Gamma)$ since $M$ is an open and close submanifold of $M\rtimes \Gamma$), Tu and Xu explored in their paper the relations when these choices are changed, we do not need this in our paper since we will be always taking the canonical connection associated to the trivial $S^1$-central extension over $M\rtimes \Gamma$. Denoting by $A=C_c^\infty(M\rtimes \Gamma)$, the linear map
$$\tau:CC_k(A)\to \Omega_c^*(\Lambda(M\rtimes \Gamma)((u))$$
is defined by
\begin{equation}\label{taudefinition}
\tau(\tilde{a}_0,a_1,...,a_k)=\int_{\Delta_k}Tr(\tilde{a}_0*e^{2\pi ius_0B}*\nabla(a_1)*\cdots*\nabla(a_k)*e^{2\pi ius_kB})ds_0ds_1\cdots ds_k,
\end{equation}
where the integration is over the $k$-simplex $\Delta_k=\{(s_0,...,s_k):s_i\geq 0, \forall i, \sum s_i=1\}$ and
where $\nabla:\Omega_c^*(M\rtimes \Gamma)\to \Omega_c^{*+1}(M\rtimes \Gamma)$ and $B\in \Omega(M)$ a $2$-curvature form (considered as an element of $\Omega^*(M\rtimes \Gamma)$ since $M$ is an open and close submanifold of $M\rtimes \Gamma$) as mentioned above, the product $*$ denotes a canonical convolution product on $\Omega_c^*(M\rtimes \Gamma)$ that makes it an associative graded algebra (Lemma 4.1 in \cite{TuXuChern}) and, most importantly,
$$Tr: \Omega_c^*(M\rtimes \Gamma)\to \Omega_c^*(\Lambda(M\rtimes \Gamma))$$
is a trace map constructed in Section 4.2 ref.cit that we briefly recall. It is called a trace map since it satisfies the classic ``trace" like properties (Proposition 4.5 ref.cit.)
that imply, as classically, that the above linear map (its extension by linearity to Laurent series in $u$)
$$\tau:CC_k(A)((u))\to \Omega_c^*(\Lambda(M\rtimes \Gamma))((u))$$
induces a homomorphism in cohomology (Proposition 4.6 ref.cit.)
$$\tau:HP_*(A)\to H^*_{c,per}(\Lambda(M\rtimes \Gamma)).$$
Let $\omega\in \Omega_c^*(M\rtimes \Gamma)$, consider $i:S(M\rtimes \Gamma)\to M\rtimes \Gamma$ the closed inclusion
and the form $i^*\omega\in \Omega_c^*(S(M\rtimes \Gamma))$, Tu and Xu defined
$$Tr(\omega):=\sum_{\gamma\in\Gamma}t^*_\gamma(i^*\omega),$$
where $t_\gamma:S(M\rtimes \Gamma)\to S(M\rtimes \Gamma)$ is the diffeomorphism given by the action by conjugation of $\gamma$ on $S(M\rtimes \Gamma)$.
A direct computation shows that $Tr(\omega)$ is a $\Gamma$-invariant form on $S(M\rtimes \Gamma)$ with compact support.
Now, let us fix $g\in \Gamma$ of finite order and consider the closed embedding
$$j_g:M_g\to S(M\rtimes \Gamma),$$
that sends $m \mapsto (m,g)$. It extends to a closed groupoid embedding
$$M_g\rtimes \Gamma_g\to S(M\rtimes \Gamma)\rtimes \Gamma,$$
where on the right the action is by conjugation. In particular if we let $Tr_g(\omega):=j_g^*Tr(\omega)$, it defines a $\Gamma_g$-invariant form on $M_g$ with compact support. It is also simple to check that $Tr_g$ satisfies the ``trace" like properties of Proposition 4.5 in ref.cit. Hence we have a homomorphism in cohomology
$$\tau_g:HP_*(A)\to \bigoplus_{k=*,\,mod\, 2}H^k_c(M_g\rtimes \Gamma_g)$$
induced from a chain morphism
$$\tau_g: CC_k(A)((u))\to \Omega_c^*(M_g\rtimes \Gamma_g)((u))$$
defined as (\ref{taudefinition}) above but using instead the trace map $Tr_g$.
We can hence consider the induced morphism
\begin{equation}
TX:=\bigoplus_{g\in \langle\Gamma\rangle^{fin}}\tau_g: HP_*(A)\to H^*_{\Gamma,deloc}(M)
\end{equation}
that fits by construction in the following commutative diagram
\begin{equation}
\xymatrix{
HP_*(C_c^\infty(M\rtimes \Gamma))\ar[rrd]_-{TX} \ar[rr]^-{\tau}&& H^*_{c,per}(\Lambda(M\rtimes \Gamma))\ar[d]\\
&&H^*_{\Gamma,deloc}(M),
}
\end{equation}
where the vertical map is induced from the embeddings $j_g$ as above.
In Section 4.4 of their paper, Tu and Xu proved that the morphism $\tau$ above is an isomorphism. The idea  is that $HP_*(C_c^\infty(M\rtimes \Gamma))$ and $H^*_{c,per}(\Lambda(M\rtimes \Gamma))$ agree locally, since an orbifold is locally a crossed product of a manifold by a finite group, and each of these cohomology funtors admits Mayer-Vietoris sequences and six term exact sequences, hence they agree globally. By following step by step the proof of Tu and Xu but using instead $TX$ and $H^*_{\Gamma,deloc}(M)$ onecan prove that $TX$ is an isomorphism as well. This is of course in complete accordance, it gives another proof for, to all known computations of the periodic cohomology groups for these kind of groupoids, see for instance \cite{BN,Cra,Ponge}. Notice that in particular we obtain that the vertical map in the last commutative diagram is as well an isomorphism.

\begin{remark}
Let $E\to M$ be a $\Gamma$-equivariant, proper vector bundle. By considering fiberwise rapidly decreasing forms $\Omega^*_{sch}(E\rtimes \Gamma)$ and $\Omega^*_{sch}(\Lambda (E\rtimes \Gamma))$ we can follow step by step of Tu and Xu construction above to get an isomorphism
\begin{equation}
TX: HP_*(\mathscr{S}(E\rtimes \Gamma))\stackrel{\cong}{\longrightarrow} H^*_{\Gamma,deloc}(E).
\end{equation}
\end{remark}

\section{Thom isomorphism in periodic cyclic (co)homology and in delocalised cohomology}\label{appendixThom}

Let $M$ be a $\Gamma$-manifold and let $E\rightarrow M$ be a $\Gamma$-oriented vector bundle over $M$. In the present paper all our vector bundles admit by hypothesis a $\Gamma$-spin$^c$ structure, we will work in this setting.


We want to describe the Thom isomorphisms
\begin{equation}\label{cohomThomisodeloc}
H^*_{\Gamma,deloc}(M)\stackrel{Th}{\longrightarrow}H^*_{\Gamma,deloc}(E)
\end{equation}
and
\begin{equation}\label{homThomisodeloc}
H_*^{\Gamma,deloc}(E)\stackrel{Th}{\longrightarrow}H_*^{\Gamma,deloc}(M).
\end{equation}
We will follow the book \cite{BGNX} Chapter 4 where more general cases of Thom isomorphisms are treated.

Following Behrend, Ginot, Noohi and Xu in \cite{BGNX} (Chapter 4), there are forms $U_{E_g}\in \Omega^*(E_g\rtimes \Gamma_g)$
such that the maps
$$\alpha\mapsto \pi_{E_g}^*\alpha\cup U_{E_g}$$
induce an isomorphism
$$H^*_{\Gamma,deloc}(M)\stackrel{Th}{\longrightarrow}H^*_{\Gamma,deloc}(E).$$
The duality discussed in Section \ref{PDuality} induces an isomorphism
$$H_*^{\Gamma,deloc}(E)\stackrel{Th}{\longrightarrow}H_*^{\Gamma,deloc}(M),$$
given in terms of currents as
$$C\mapsto Th(C)$$
where componentwisely.
$$Th(C)(\alpha):=C(\pi_{E_g}^*(\alpha)\cup U_{E_g}).$$

Using the Baum-Connes isomorphism (Theorem 7.14 in \cite{BCfete} or the dual of the Tu-Xu isomorphism above)
$$H_*^{\Gamma,deloc}(M)\stackrel{BC}{\longrightarrow} HP^*(C_c^\infty(M\rtimes \Gamma))$$
and the analog for $E$ (as a unit groupoid), the Thom isomorphism (\ref{homThomisodeloc}) induces an unique isomorphism in periodic cyclic cohomology
\begin{equation}
HP^*(\mathscr{S}(E\rtimes \Gamma))\stackrel{Th}{\longrightarrow} HP^*(C_c^\infty(M\rtimes \Gamma)).
\end{equation}
Now, we could do the same in periodic cyclic homology but we won't, the reason is that we want the Thom isomorphism to commute with Chern-Connes character and to satisfy properties as its K-theoretical version, for this reason we let
\begin{equation}
HP_*(C_c^\infty(M\rtimes \Gamma))\stackrel{Th}{\longrightarrow} HP_*(\mathscr{S}(E\rtimes \Gamma))
\end{equation}
as the unique isomorphism commuting (up to tensoring with $\mathbb{C}$) with the Chern-Connes character, we call it as well the Thom isomorphism even if we know that it is more a twisted Thom isomorphism.

\begin{remark}
In this section we have defined some Thom isomorphisms without entering too much in the details of their explicit definitions, the reason is that in the present article we only need some formal properties these morphisms.
\end{remark}

\bibliographystyle{plain}
\bibliography{bibliographie}

\end{document}